\newcolumntype{L}[1]{>{\raggedright\let\newline\\\arraybackslash\hspace{0pt}}m{#1}}
\newcolumntype{C}[1]{>{\centering\let\newline\\\arraybackslash\hspace{0pt}}m{#1}}
\newcolumntype{R}[1]{>{\raggedleft\let\newline\\\arraybackslash\hspace{0pt}}m{#1}}
\numberwithin{equation}{section}
      \theoremstyle{plain}
      \newtheorem{assumption}{Assumption}
\newtheorem{theorem}{Theorem}[section]
\newtheorem{definition}[theorem]{Definition}
\newtheorem{remark}[theorem]{Remark}
\newtheorem{corollary}[theorem]{Corollary}
\newtheorem{lemma}[theorem]{Lemma}
\newtheorem{example}[theorem]{Example}
\begin{document}

\title{Revisiting general source condition in learning over a Hilbert space}

\author[N. Gupta]{Naveen Gupta}
\address[N. Gupta]{Indian Institute of Technology Delhi, India}
\email{ngupta.maths@gmail.com}
\author{S. Sivananthan}
\address[S. Sivananthan]{Indian Institute of Technology Delhi, India}
\email{siva@maths.iitd.ac.in}
\maketitle

\begin{abstract}
In Learning Theory, the smoothness assumption on the target function (known as source condition) is a key factor in establishing theoretical convergence rates for an estimator. The existing general form of the source condition, as discussed in learning theory literature, has traditionally been restricted to a class of functions that can be expressed as a product of an operator monotone function and a Lipschitz continuous function. In this note, we remove these restrictions on the index function and establish optimal convergence rates for least-square regression over a Hilbert space with general regularization under a general source condition, thereby significantly broadening the scope of existing theoretical results.

\end{abstract}

\section{Introduction}
This article explores the least-square regression problem in a real 
separable Hilbert space, a fundamental setting in functional analysis and machine learning. In any regression task, understanding the smoothness of the target function plays a key role in determining the effectiveness and convergence of the chosen algorithm. In a Hilbert space, the smoothness of an element can be determined by the rate at which its Fourier coefficients decay with respect to a given orthonormal basis. This notion is typically used in the analysis of convergence  of methods for solving a compact linear operator equation
$Tf=g$, where the smoothness of the unknown solution is measured relative to the orthonormal eigenfunctions of the operator
$T$. In fact, the convergence rates of these methods inherently depend on the smoothness of the unknown solution. This naturally raises the question: how generally can we describe the smoothness of an element in a Hilbert space?  A beautiful result of \cite{howgeneral2008} answers that 
for any element $f$ in a Hilbert space, there exists an index function $\phi$—a continuous and strictly increasing function on $[0, s]$ with $\phi(0) = 0$—such that $f$ is in the range of $\phi(T)$.
Therefore, the most general approach to impose a smoothness condition on the target function is to require:  
$$f \in \Omega_{\phi}(R) := \{f: f = \phi(T)v, \|v\| \leq R\},$$
for some index function $\phi$ and positive constant $R$. 
This is referred as a general source condition on the target function.

A commonly studied source condition in Learning Theory literature is the Hölder source condition, characterized by $\phi(t) = t^r$ with $r > 0$ \cite{multipass, lin2020stochastic, devito, zhou2017distributed}. This has been extended to a wide class of source conditions, expressed via index functions with one of the three restrictive conditions, $(i)$ it is operator monotone, $(ii)$ it is Lipschitz continuous or $(iii)$ it can be written in the product of an operator monotone and a Lipschitz continuous function, 
which has been explored in works such as \cite{lin2020optimalspectral,sergeionregularization,rastogi2017optimal, shuai_2020_balancing}. 
While these restrictions simplify certain theoretical analyses, they also introduce significant challenges. First, verifying whether a specific function is operator monotone can be highly non-trivial, requiring deep knowledge of the function's properties and its interplay with operator theory. 
Second, while the class of index functions formed as the product of an operator monotone function and a Lipschitz function describes a rich set of examples, it does not encompass all index functions (see \cite{mathe2002_moduli}). For instance, non-smooth index functions can easily be constructed that are not expressible as such a product.
Consequently, this restrictive source condition might fail to encompass the true solution, potentially limiting the scope and applicability of the results.


In summary, the convergence analysis in Learning Theory is addressed under the source condition given by the product form of the index function as a Lipschitz and operator monotone function.
The fundamental question is whether the convergence rates can be derived for any index function so that the smoothness of any function can be addressed \cite{howgeneral2008}. In this paper, we affirmatively answer this fundamental question with  the optimal convergence rate for algorithms learning over a Hilbert space. As noted in \cite{lin2020optimalspectral}, our results also provide optimal convergence rates for classical learning theory framework with the general source condition. Therefore our work expands the applicability of regularized learning algorithms and provides a more comprehensive theoretical basis for studying convergence analysis in Learning Theory.

\subsection{Related work} Regularization methods in learning theory have been extensively studied. The classical works have been mainly focused on Tikhonov regularization \cite{poggio2000SVM, cuckerzhou2007learningtheory, smale2002foundation, smalezhouxuan2007learningtheory, sergei2013}, often under the Hölder smoothness assumption for the target function. In \cite{devito}, the authors established optimal convergence rates for Tikhonov regularization within the Hölder source condition framework. While learning over a Hilbert space, authors in \cite{multipass, lin2020stochastic, zhou2017distributed} explored general regularization framework to encompass the whole range of Hölder source condition. In \cite{sergeionregularization}, authors utilized this general regularization method to accommodate a wide range of source conditions represented by index functions that can be expressed as the product of an operator monotone function and a Lipschitz continuous function. Subsequently, \cite{rastogi2017optimal, shuai_2020_balancing} improved the results of \cite{sergeionregularization} and established optimal convergence rates within the same framework. More recently, the same source condition as \cite{sergeionregularization} has been explored in several studies \cite{lin2020optimalspectral, polynomial2023regularization}.



\subsection{Organization} The structure of the paper is as follows: Section~\ref{sec:preliminaries} introduces the essential background on the general regularization method within the framework of reproducing kernel Hilbert space (RKHS). Section~\ref{sec:main_results} outlines the key assumptions and presents the main findings of the study. Additionally, this section includes supplementary results required to support the proofs of the main results.

\section{Preliminaries}\label{sec:preliminaries}
Let us assume that input space $H$ is a real separable Hilbert space with inner product $\langle \cdot, \cdot \rangle_{H}$ and the output space $Y$ is a closed subset of $\mathbb{R}$. Let $\rho(w,y)= \rho_{H}(w)\rho(y|w)$ be a joint probability measure on $Z = H \times Y$, $\rho_{H}(\cdot)$ the induced marginal measure on $H$ having compact support, and $\rho(\cdot|w)$ the conditional probability measure on $Y$.\\
\noindent
The goal of least-square regression is to find an estimator that minimizes the expected risk
\begin{equation}\label{expected_risk}
   \inf_{w \in H}\mathcal{E}(w), \quad \mathcal{E}(w) = \int_{H \times Y} (\langle w, w' \rangle_{H}-y)^2 d\rho(w',y),
\end{equation}
where $\rho$ is known only through a sample $\textbf{z} := \{(w_{i}, y_{i})~:~ i=1,2,\ldots,n\}$ independently and identically distributed (i.i.d.) according to $\rho$. As the expected risk can not be computed exactly and that it can be only approximated through the empirical risk $\mathcal{E}_{\textbf{z}}(w)$, defined as:
\begin{equation}\label{vector_empirical_minimization_equation}
    \inf_{w \in H}\mathcal{E}_{\textbf{z}}(w), \quad\mathcal{E}_{\textbf{z}}(w) = \frac{1}{n}\sum_{i=1}^{n}(\langle w, w_{i}\rangle_{H}-y_{i})^2.
\end{equation}
Note that if we consider $H = L^2([0,1])$, then the problem $(\ref{expected_risk})$ provides a solution to the functional linear regression (FLR) model \cite{ramsay1991some}. Therefore, the problem under consideration is closely related to the FLR model.

\subsection{Reformulating the minimization problem in terms of operator equation} Let $L^2(H, \rho_H)$ denote the space of real valued square-integrable measurable functions on $H$, equipped with the norm $ \|\cdot\|_{\rho_H}$ and inner product $ \langle \cdot, \cdot \rangle_{\rho_H}$, defined with respect to the measure $ \rho_H$ and we assume that $\langle w, w'\rangle_{H} \leq \kappa^2,~ \rho_{H}-$ almost surely. We define operators $\mathcal{T}:= S_\rho^*S_{\rho} : H \to H$ and $\mathcal{L}:= S_\rho S_{\rho}^*: L^2(H, \rho_{H}) \to L^2(H, \rho_{H})$, where $S_{\rho}: H \to L^2(H, \rho_{H})$ is defined as $S_{\rho}(w) = \langle w, \cdot \rangle_{H}$ and $S_{\rho}^* : L^2(H, \rho_{H}) \to H$ is the adjoint of $S_{\rho}$ given as
$$S_{\rho}^* g = \int_{H} w g(w) d\rho_{H}(w).$$ 
Since the probability measure 
$\rho$ is not accessible, we consider a discretized versions of $S_{\rho}$, denoted as $\mathcal{S}_{n} : H \to \mathbb{R}^{n}$, which is defined as $(\mathcal{S}_{n}w)_{i} = \langle w, w_{i} \rangle_{H},~ i =1,\ldots,n$. We also define the operator $\mathcal{T}_{\mathbf{x}} = \mathcal{S}_{n}^*\mathcal{S}_{n} : H \to H$ given as:
$$\mathcal{T}_{\mathbf{x}}w = \frac{1}{n}\sum_{i=1}^{n} \langle w, w_{i} \rangle_{H} w_{i},$$
where $\mathcal{S}_{n}^*$ is the adjoint of $\mathcal{S}_{n}$. Consider $F(w) = \frac{1}{n}\sum_{i=1}^{n}(\langle w, w_{i}\rangle_{H}-y_{i})^2$, then the Fréchet derivative of $F$ is given as $DF(w)= \frac{1}{n}\sum_{i=1}^{n}w_{i}\langle w, w_{i}\rangle_{H} - \frac{1}{n}\sum_{i=1}^{n}y_{i}w_{i}$. As the minimizer will satisfy $DF(w)=0$, we can see that the minimizer of empirical risk in $(\ref{vector_empirical_minimization_equation})$ can be given by solving the linear equation $\mathcal{T}_{\mathbf{x}}w = \mathcal{S}_{n}^*y$. Solving this linear equation presents two main challenges. First, the operator \( \mathcal{T}_{\mathbf{x}} \) is not generally invertible, making the problem ill-posed. Second, the solution may overfit the data, which is not desired as the observed data need not be exact. To address these issues, regularization techniques are widely studied. So the estimator $w_{\textbf{z}}^{\lambda}$ of $w_{H}$ which is a minimizer of $(\ref{expected_risk})$ is given as $w_{\textbf{z}}^{\lambda} = g_{\lambda}(\mathcal{T}_{\mathbf{x}})\mathcal{S}_{n}^*y$, where $g_{\lambda}$ represents a regularization family, defined as follows.

\subsection{Regularization family}
A family of functions, $g_{\lambda}:[0,s] \to \mathbb{R},\, 0<\lambda \leq s$, is called the regularization family satisfying the following conditions:
\begin{itemize}
    \item There exists a constant $A>0$ such that
    \begin{equation}
    \label{reg1}
        \sup_{0<\sigma \leq s} |\sigma g_{\lambda}(\sigma )| \leq A.
    \end{equation}
    \item There exists a constant $B>0$ such that
    \begin{equation}
    \label{reg2}
        \sup_{0<\sigma \leq s} |g_{\lambda}(\sigma )| \leq \frac{B}{\lambda}.
    \end{equation}
    \item There exists a constant $D >0 $ such that
    \begin{equation}
        \label{reg3}
        \sup_{0<\sigma \leq s} |1- g_{\lambda}(\sigma )\sigma| = \sup_{0<\sigma \leq s} |r_{\lambda}(\sigma)| \leq D,
    \end{equation}
    where $r_{\lambda}(\sigma) = 1- \sigma g_{\lambda}(\sigma)$.
    \item The maximal $\nu$ such that 
    \begin{equation}
    \label{qualification}
    \sup_{0<\sigma \leq s} |1- g_{\lambda}(\sigma )\sigma|\sigma^{\nu} = \sup_{0<\sigma \leq s}|r_{\lambda}(\sigma)| \sigma^{\nu} \leq \gamma_{\nu} \lambda^{\nu},
    \end{equation}
    is called the qualification of the regularization family $g_{\lambda}$, where the constant $\gamma_{\nu}$ does not depend on $\lambda$.
\end{itemize}
\begin{example}
    Choosing $g_{\lambda}(\sigma) = \frac{1}{\sigma + \lambda}$ leads to Tikhonov regularization, which satisfies the aforementioned conditions with constants $A = B = D = 1$  and has a qualification of $1$. Another choice  of $g_{\lambda}(\sigma) = \sigma^{-1} \mathds{1}_{\sigma \geq \lambda}$, corresponding to spectral cut-off, also satisfies these conditions with $A = B = D = 1$ and possesses an arbitrary qualification.
\end{example}
Convergence analysis leverages the qualification of the regularization family by utilizing the relationship between the regularization family and the index function, as described in the following definition from \cite{mathe_2003_geometry}.
\begin{definition}
    We say that the qualification $\nu_{0}$ covers the index function $\phi$ if there is $c>0$ such that
\begin{equation}\label{qualification_covering_index}
    c \frac{t^{\nu_{0}}}{\phi(t)} \leq \inf_{t \leq \sigma \leq s} \frac{\sigma^{\nu_{0}}}{\phi(\sigma)},
\end{equation}
where $0<t \leq s$.
\end{definition}
\noindent
We refer the reader to \cite{sergei2013,englmartinbook,AI2022sergei} for more details of the regularization family and its interplay with general source condition.

%

\subsection{Relation with classical non-parametric regression} Consider the hypothesis space $\mathcal{H}_{K} = \{f : H \to \mathbb{R} ~|~ \exists ~ w\in H \text{ with } f(w') = \langle w, w' \rangle_{H},~ \rho_{H}-\text{almost surely}\}$ which is a reproducing kernel Hilbert space (RKHS) with the linear reproducing kernel $K : H \times H \to \mathbb{R}$ given as $K(w,w') = \langle w, w'\rangle_{H}$. This space is equipped with the inner product $\langle f, g \rangle_{\mathcal{H}_{K}} = \langle \langle w', \cdot \rangle_{H}, \langle w'', \cdot \rangle_{H} \rangle_{\mathcal{H}_{K}} = \langle w', w''\rangle_{H},~ \forall~f(\cdot) = \langle w', \cdot \rangle_{H},g = \langle w'', \cdot \rangle_{H} \in \mathcal{H}_{K}$. With this setup, the problem $(\ref{expected_risk})$ can be rewritten as:
\begin{equation}\label{expected_normal}
    \inf_{f \in \mathcal{H}_{K}}\tilde{\mathcal{E}}(f),\quad \tilde{\mathcal{E}}(f) = \int_{H \times \mathbb{R}} (f(w')-y)^2 d\rho(w',y).
\end{equation}
For further details on RKHS, we refer the reader to \cite{aronszajn1950rkhs,SVM2008steinwart,AI2022sergei}. Furthermore, assuming $\displaystyle\int_{H \times Y} y^2 \, d\rho(w, y) < \infty $, it follows that the regression function $f_{\rho} =\displaystyle\int_Y y \, d\rho(y|w)$ belongs to $L^2(H, \rho_H)$. Then it is straightforward to verify that $f_{\rho}$ is the function that minimizes $\tilde{\mathcal{E}}(f)$ over $L^2(H, \rho_H)$. Similar to the previous case, we replace the expected risk in problem $(\ref{expected_normal})$ with the empirical risk defined as follows: 
\begin{equation}\label{empirical_normal}
    \inf_{f \in \mathcal{H}_{K}}\tilde{\mathcal{E}_{\mathbf{z}}}(f), \quad \tilde{\mathcal{E}_{\mathbf{z}}}(f) = \frac{1}{n}\sum_{i=1}^{n}(f(w_{i})-y_{i})^2. 
\end{equation}

\noindent
We define operators $L_{K}:= I_{K}I_{K}^* : L^2(H, \rho_{H}) \to L^2(H, \rho_{H})$ and $T:= I_{K}^*I_{K}: \mathcal{H}_{K} \to \mathcal{H}_{K}$, where $I_{K}: \mathcal{H}_{K} \to L^2(H, \rho_{H}) $ is the inclusion operator defined as $(I_{K}f)(w') = \langle \langle w',\cdot \rangle_{H} ,f \rangle_{\mathcal{H}_{K}}$, 
 and $I_{K}^*:L^2(H, \rho_{H}) \to \mathcal{H}_{K}$ is the adjoint of the inclusion operator given as
$$I_{K}^*g= \int_{H} \langle w',\cdot \rangle_{H} g(w')\, d\rho_{H}(w').$$ 

Furthermore, we define the data-dependent operator $S_{\mathbf{x}} : \mathcal{H}_{K} \to \mathbb{R}^n$ as $(S_{\mathbf{x}}f)_{i} = f(w_{i}) = \langle f, \langle w_{i}, \cdot \rangle_{H} \rangle_{\mathcal{H}_{K}}, \forall i=1,2,\ldots,n$. The associated operator $T_{\mathbf{x}}$, given by $T_{\mathbf{x}} = S_{\mathbf{x}}^*S_{\mathbf{x}} : \mathcal{H}_{K} \to \mathcal{H}_{K}$ is explicitly defined as:
$$T_{\mathbf{x}}f = \frac{1}{n}\sum_{i=1}^{n} \langle w_{i}, \cdot \rangle_{H} \langle f, \langle w_{i},\cdot \rangle_{H} \rangle_{\mathcal{H}_{K}},$$
where $S_{\mathbf{x}}^*$ is the adjoint of $S_{\mathbf{x}}$.\\

\noindent
A solution $f_{\mathcal{H}}^{\dagger}$ to the problem $(\ref{expected_normal})$ is the projection of the regression function onto the closure of $\mathcal{H}_{K}$. Throughout the analysis we assume that the solution $f_{\mathcal{H}}^{\dagger} \in \mathcal{H}_{K}$. The objective is to construct an estimator $f_{\mathbf{z}}^{\lambda}$ of $f_{\mathcal{H}}^{\dagger}$ by solving $(\ref{empirical_normal})$.  Similar to problem $(\ref{vector_empirical_minimization_equation})$, a solution for problem $(\ref{empirical_normal})$ can be given by solving the linear equation $T_{\mathbf{x}}f = S_{\mathbf{x}}^*y$. Hence the estimator for the minimizer of $(\ref{expected_normal})$ is given as $f_{\mathbf{z}}^{\lambda}= g_{\lambda}(T_{\mathbf{x}})S_{\mathbf{x}}^*y$.\\

\noindent
Observe that the structure of the linear kernel establishes a correspondence between the input space $H$ and the RKHS $\mathcal{H}_{K}$ associated with the linear kernel. For each $w \in H$, we can define $K_w: H \to \mathbb{R}, K_w(w') = K(w, w')$, then $K_w \in \mathcal{H}_{K}$. Similarly, for any $f \in \mathcal{H}_{K}$, there exists $w \in H$ such that $f = K_w$. From this, it follows directly that the solution to the expected risk minimization problem $(\ref{expected_risk})$, denoted as $w_{H}$, satisfies the relationship $S_{\rho} w_{H} = f_{\mathcal{H}}^{\dagger}$. Similarly, when considering the empirical estimator of $w_{H}$, denoted by $ w_{\mathbf{z}}^{\lambda} $, satisfies the analogous equation $ S_{\rho} w_{\mathbf{z}}^{\lambda} = f_{\mathbf{z}}^{\lambda} $. Observe that $\|S_{\rho}(w)\|_{\mathcal{H}_{K}} = \|w\|_{H},~\forall~ w \in H$. Thus, the smoothness of the solution $w_{H}$ can be analyzed by studying the smoothness properties of $f_{\mathcal{H}}^{\dagger}$.\\

\noindent
\subsection{Smoothness condition}
Since the operators $T$ and $L_{K}$ are positive and compact, we have the following spectral representations:
$$T = \sum_{i=1}^{\infty} \mu_{i} \langle \cdot, e_{i} \rangle_{\mathcal{H}} e_{i}, \quad L_{K} = \sum_{i=1}^{\infty} \mu_{i} \langle \cdot, \psi_{i} \rangle_{\rho_{H}} \psi_{i},$$
where $\{(\mu_{i},e_{i})\}_{i \in \mathbb{N}}$ and $\{(\mu_{i},\psi_{i})\}_{i \in \mathbb{N}}$ are eigenvalue-eigenfunction pairs of operators $T$ and $L_{K}$ respectively with $\kappa^2 \geq \mu_{1}\geq \mu_{2} \geq \mu_{3}\geq \ldots > 0$. Using the spectral representation of operators, it is evident that $f_{\mathcal{H}}^{\dagger} = \sum_{i=1}^{\infty} \frac{1}{\sqrt{\mu_{i}}}\langle f_{\rho}, \psi_{i} \rangle_{\rho_{H}} e_{i} $. Then $f_{\mathcal{H}}^{\dagger} \in \mathcal{R}(S_{\rho}) = \mathcal{H}_{K}$ if and only if
\begin{equation}\label{picard_criterion}
    \sum_{i=1}^{\infty} \frac{\langle f_{\rho}, \psi_{i}\rangle_{\rho_{H}}^2}{\mu_{i}} < \infty.
\end{equation}
In other words, existence of a solution to $\inf_{f \in \mathcal{H}_{K}}\tilde{\mathcal{E}}(f)$ in $\mathcal{R}(S_{\rho})$ relies on the decay of the Fourier coefficients $\langle f_{\rho}, \psi_{i} \rangle_{\rho_{H}}$. Condition $(\ref{picard_criterion})$, referred as zero smoothness condition, is equivalent to stating that $f_{\mathcal{H}}^{\dagger} \in \mathcal{R}(\phi(T))$, where $\phi(t) = \sqrt{t}$. If we instead consider $\phi(t) = t^{\alpha}$ (Hölder source condition) with $\alpha \geq \frac{1}{2}$, this imposes a slightly stronger smoothness condition on $f_{\mathcal{H}}^{\dagger}$ requiring that
\begin{equation*}
    \sum_{i=1}^{\infty} \frac{\langle f_{\rho}, \psi_{i}\rangle_{\rho_{H}}^2}{\mu_{i}^{2 \alpha}} < \infty.
\end{equation*}
For instance if $\alpha = 1$, then the condition demands that the regression function $f_{\rho}$ should reside in $\mathcal{H}_{K}$.
In \cite{howgeneral2008}, the authors demonstrated that for every $f \in \mathcal{H}_{K}$ there exists an index function $\phi$ such that $f \in \mathcal{R}(\phi(T))$. Therefore, the natural way to describe the smoothness of $f_{\mathcal{H}}^{\dagger}$ is in terms of the decay of its Fourier coefficients, given by
\begin{equation}\label{sourceinfourier}
    \sum_{i=1}^{\infty} \frac{\langle f_{\rho}, \psi_{i} \rangle_{\rho_{H}}^2}{\phi^2(\mu_{i})} < \infty,
\end{equation}
where $\phi$ is an index function on the interval $[0, s]~(s \geq \kappa^2)$. 
\\

\noindent
So in terms of source condition, we assume that for some $R>0$
\begin{equation}\label{sourceset}
f_\mathcal{H}^{\dagger} \in \Omega_{\phi,R} := \{f \in \mathcal{H}_{K} : f = \phi(T)v, \|v\|_{\mathcal{H}} \leq R\}.
\end{equation}

\section{Main results}\label{sec:main_results}
In this section, we establish the convergence rates for the general regularization method under a general source condition, without imposing any additional constraints on the index function. We begin by outlining the assumptions necessary for our analysis.

\begin{assumption}\label{as:1}
We assume that  $\displaystyle\int_{H \times Y} y^2 \, d\rho(x, y) < \infty$. We also assume that $Pf_{\rho} \in \mathcal{R}(I_{K})$, where $P : L^2(X, \rho_{H}) \to L^2(X, \rho_{H})$ denotes the projection onto $\overline{\mathcal{R}(I_{K})}$.
\end{assumption}
As mentioned in the previous section, the assumption $\int_{H \times Y} y^2 \, d\rho(w, y) < \infty$ ensures that $f_{\rho} \in L^2(H, \rho_{H})$, while $Pf_{\rho} \in \mathcal{R}(I_{K})$ guarantees the existence of $f_{\mathcal{H}}^{\dagger}$, which is the Moore-Penrose inverse of the operator equation $I_{K}f = f_{\rho}$, and $ S_{\rho}w_{H} = f_{\mathcal{H}}^{\dagger}$. In \cite{lin2020optimalspectral}, the authors assume the existence of $w_H \in H$ that minimizes the expected risk in $(\ref{expected_risk})$. The presence of such a $w_H$ guarantees that the minimizer of the expected risk in $(\ref{expected_normal})$, given by 
$f_{\mathcal{H}}^{\dagger}$, belongs in $\mathcal{H}_{K}$. Consequently, assuming the existence of $w_H$ ensures that Assumption \ref{as:1} holds.

\begin{assumption}
    \label{as:2}
    We assume that $S_{\rho}w_{H} = f_{\mathcal{H}}^{\dagger} \in \Omega_{\phi, R} = \{f\in \mathcal{H}_{K} : f = \phi(T)v, \|v\|_{\mathcal{H}} \leq R \}$,
    where $\phi :[0,s] \to \mathbb{R}~ (s \geq \kappa^2)$ is an index function, i.e., $\phi$ is non-decreasing, continuous and $\phi(0) = 0$.
\end{assumption}

\begin{assumption}
    \label{as:3}
    There are two positive constants $\Sigma$ and $M$ such that for almost all $w \in H$,
    \begin{equation*}
        \int_{Y}\left(e^{\frac{|y-\langle w_{H},w \rangle_{H}|}{M}}-\frac{|y-\langle w_{H}, w \rangle_{H}|}{M}-1\right)d\rho(y|w) \leq \frac{\Sigma^2}{2M^2}.
    \end{equation*} 
\end{assumption}

\begin{assumption}
    \label{as:4}
    For some $b>1$,
\begin{equation*}
    i^{-b}\lesssim \mu_{i} \lesssim i^{-b} \quad \forall  i \in \mathbb{N},
\end{equation*}
where $(\mu_{i},e_{i})_{i\in \mathbb{N}}$ is the eigenvalue-eigenfunction pair of operator $T$ and the notation $a_{n} \lesssim b_{n}$ implies that there exists a positive constant $\tilde{c}$ such that $a_{n} \leq \tilde{c}b_{n}$ for all $n \in \mathbb{N}$.
\end{assumption}
Asymptotic polynomial decay condition of the eigenvalues $\{\mu_{i}\}_{i \in \mathbb{N}}$ ensures the bound on the effective dimension of the operator $T$ defined as 
$\mathcal{N}(\lambda)= \text{trace}(T (T+\lambda I)^{-1})$. Indeed,
\begin{equation}
\label{effectivedimention}
    \mathcal{N}(\lambda)= \sum_{i \in \mathbb{N}}\frac{\mu_{i}}{\mu_{i}+\lambda} \lesssim \sum_{i\in \mathbb{N}}\frac{i^{-b}}{i^{-b}+\lambda} \lesssim \lambda^{-\frac{1}{b}},
\end{equation}
where the last step follows from \cite[Lemma A.11]{gupta2024optimal}. Throughout this section, $\|U\|_{\mathcal{L}(\mathcal{H}_{K})}$ denotes the operator norm of a bounded operator $U: \mathcal{H}_{K} \to \mathcal{H}_{K}$.

\begin{theorem}\label{mainresult}
Assume Assumptions \ref{as:1}-\ref{as:3} are satisfied, and $\nu$ represents the qualification of the regularization family $\{g_{\lambda}\}$. Let $0<\eta<1$, $\sqrt{\lambda} \geq 16 \kappa \sqrt{\frac{\mathcal{N}(\lambda)}{n}} \log(4/\eta)$ and $\nu' = \lfloor \nu \rfloor$.
\begin{enumerate}
    \item If the index function $\phi$ is covered by the qualification of the regularization family, then with probability at least $1-\eta$, we have
    \begin{equation*}
         \|w_{\mathbf{z}}^{\lambda}- w_{H}\|_{H} \leq \tilde{C_{1}} \left(\frac{1}{n \lambda} + \sqrt{\frac{\mathcal{N}(\lambda)}{n \lambda}}\right)\log(8/\eta) + \tilde{C_{2}}\phi(\lambda),
    \end{equation*}
    where $\tilde{C_{1}} = \max\{C_{1},C_{2},C_{3},C_{4}\}$, $\tilde{C_{2}}= 2^{2\nu-\nu'-1} \max\{1,1/c\} (D+ \gamma_{\nu}) R$,
    $C_{1}=2^{2\nu-\nu'+1}(D+\gamma_{\nu})\nu' \kappa \|f_{\mathcal{H}}^{\dagger}\|_{\mathcal{H}_{K}}$, $C_{2} = 2 \sqrt{2} (B+\sqrt{AB}) \kappa M \|f_{\mathcal{H}}^{\dagger}\|_{\mathcal{H}_{K}}$, $C_{3}= 2^{2 \nu-\nu'} (D+\gamma_{\nu})\nu' \sqrt{\kappa} \|f_{\mathcal{H}}^{\dagger}\|_{\mathcal{H}_{K}}$ and $C_{4} = 2 \sqrt{2} (B+\sqrt{AB}) \Sigma \|f_{\mathcal{H}}^{\dagger}\|_{\mathcal{H}_{K}}$.\\
    \item If the index function $\sqrt{t} \phi(t)$ is covered by the qualification of the regularization family, then with probability at least $1-\eta$, we have
    \begin{equation*}
    \|\mathcal{T}^{\frac{1}{2}}(w_{\mathbf{z}}^{\lambda}- w_{H})\|_{H} \leq \tilde{C_{3}}\left(\frac{1}{n \sqrt{\lambda}}+ \sqrt{\frac{\mathcal{N(\lambda)}}{n}}\right) \log(8/\eta) + \tilde{C_{2}}\phi(\lambda) \sqrt{\lambda},
\end{equation*}
where $\tilde{C_{3}} = \max\{C_{1},C_{5},C_{3},C_{6}\}$, $C_{5} = 2 \sqrt{2} \nu_{3} \kappa M \|f_{\mathcal{H}}^{\dagger}\|_{\mathcal{H}_{K}}$, $C_{6} = 2 \sqrt{2} \nu_{3} \Sigma \|f_{\mathcal{H}}^{\dagger}\|_{\mathcal{H}_{K}}$, and $\nu_{3}= (B+A+2 \sqrt{AB})$.
\end{enumerate}    
\end{theorem}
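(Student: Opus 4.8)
The plan is to follow the classical error-decomposition strategy for regularized learning algorithms, but to replace the usual interpolation-inequality arguments—which require $\phi$ to be operator monotone or a product of operator monotone and Lipschitz functions—by an argument that uses only the qualification-covers-$\phi$ hypothesis together with a ``$\phi$ of a perturbed operator'' estimate. First I would write
\[
w_{\mathbf{z}}^{\lambda} - w_{H} = g_{\lambda}(\mathcal{T}_{\mathbf{x}})\mathcal{S}_{n}^{*}y - w_{H}
= \underbrace{g_{\lambda}(\mathcal{T}_{\mathbf{x}})\bigl(\mathcal{S}_{n}^{*}y - \mathcal{T}_{\mathbf{x}}w_{H}\bigr)}_{\text{sample/noise error}}
\;-\;\underbrace{r_{\lambda}(\mathcal{T}_{\mathbf{x}})\,w_{H}}_{\text{approximation error}},
\]
where $r_{\lambda} = 1 - \sigma g_{\lambda}(\sigma)$. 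Using the isometry $\|S_{\rho}w\|_{\mathcal{H}_K} = \|w\|_H$ and the operator identification between $H$ and $\mathcal{H}_K$, this transfers verbatim to the RKHS picture with $T_{\mathbf{x}}$, $S_{\mathbf{x}}$, and $f_{\mathcal{H}}^{\dagger} = \phi(T)v$.

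The next step is to introduce the regularized empirical operator $T_{\mathbf{x}} + \lambda I$ and the population operator $T + \lambda I$ and to pass everything through them: one inserts $(T+\lambda I)^{1/2}(T+\lambda I)^{-1/2}$ factors and controls the two random quantities
\[
\mathcal{A}_{\lambda} := \bigl\|(T+\lambda I)^{-1/2}(T-T_{\mathbf{x}})(T+\lambda I)^{-1/2}\bigr\|_{\mathcal{L}(\mathcal{H}_K)},
\qquad
\mathcal{B}_{\lambda} := \bigl\|(T+\lambda I)^{-1/2}\bigl(S_{\mathbf{x}}^{*}y - T_{\mathbf{x}}f_{\mathcal{H}}^{\dagger}\bigr)\bigr\|_{\mathcal{H}_K}.
\]
The concentration bounds for $\mathcal{A}_{\lambda}$ (a Bernstein/Neumann-series argument giving, on the stated event $\sqrt{\lambda}\ge 16\kappa\sqrt{\mathcal{N}(\lambda)/n}\log(4/\eta)$, that $\mathcal{A}_{\lambda}\le 1/2$ and hence $\|(T+\lambda I)^{1/2}(T_{\mathbf{x}}+\lambda I)^{-1/2}\|\le\sqrt{2}$) and for $\mathcal{B}_{\lambda}$ (a Bernstein bound under Assumption \ref{as:3} producing the $\tfrac{1}{n\sqrt{\lambda}} + \sqrt{\mathcal{N}(\lambda)/n}$ term with the $\log(8/\eta)$ factor, and the constants $M,\Sigma,\kappa$ feeding into $C_2,C_4$ resp.\ $C_5,C_6$) are by now standard; I would cite or reproduce them as supplementary lemmas. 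Combining these with the regularization bounds \eqref{reg1}--\eqref{reg3} handles the sample error and yields the $\tilde C_1(\cdots)\log(8/\eta)$ term in part (1) and the $\tilde C_3(\cdots)\log(8/\eta)$ term in part (2). The extra $\nu'=\lfloor\nu\rfloor$ and $2^{2\nu-\nu'}$ factors come from writing $g_\lambda$ bounds at integer orders and using qualification \eqref{qualification}; the appearance of $\kappa\|f_{\mathcal{H}}^{\dagger}\|$ and $\sqrt{\kappa}\|f_{\mathcal{H}}^{\dagger}\|$ in $C_1,C_3$ reflects one more operator-difference term $\|(T_{\mathbf{x}}-T)f_{\mathcal{H}}^{\dagger}\|$ split off from the data term.

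The crux—and the genuinely new part—is the approximation error $\|r_{\lambda}(T_{\mathbf{x}})\phi(T)v\|_{\mathcal{H}_K}$, since $\phi(T)$ and $r_{\lambda}(T_{\mathbf{x}})$ do not commute and $\phi$ is merely an index function. The hard part will be bounding this without operator monotonicity. My plan is: (i) swap $\phi(T)$ for $\phi(T_{\mathbf{x}})$ up to a controllable error, using a perturbation estimate of the form $\|(\phi(T) - \phi(T_{\mathbf{x}}))(T_{\mathbf{x}}+\lambda I)^{-1/2}\|$ or $\|(T+\lambda I)^{-1/2}(\phi(T)-\phi(T_{\mathbf{x}}))\|$; since a general $\phi$ need not be Lipschitz, the right tool is the qualification-covering inequality \eqref{qualification_covering_index}, which lets one compare $\phi(\sigma)$ with $\phi(\lambda)\cdot(\sigma/\lambda)^{\nu_0}$-type bounds on the scale set by $\lambda$, together with the elementary fact that on the random event $\|(T+\lambda I)(T_{\mathbf{x}}+\lambda I)^{-1}\|\le 2$; then (ii) bound $\|r_{\lambda}(T_{\mathbf{x}})\phi(T_{\mathbf{x}})\|_{\mathcal{L}(\mathcal{H}_K)}$ by the scalar estimate $\sup_{0<\sigma\le s}|r_{\lambda}(\sigma)|\phi(\sigma)\le (D+\gamma_\nu)\max\{1,1/c\}\,\phi(\lambda)$, which is exactly where ``$\phi$ covered by qualification $\nu$'' enters—splitting the supremum at $\sigma=\lambda$, using \eqref{reg3} for $\sigma\le\lambda$ and \eqref{qualification} plus \eqref{qualification_covering_index} for $\sigma>\lambda$. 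This produces the $\tilde C_2\,\phi(\lambda)$ term in part (1). For part (2), the same scheme is run with an extra $T^{1/2}$ (equivalently $\mathcal{T}^{1/2}$) on the left: one needs $\sqrt{t}\,\phi(t)$ to be covered so that $\sup_\sigma|r_\lambda(\sigma)|\sqrt{\sigma}\,\phi(\sigma)\lesssim\sqrt{\lambda}\,\phi(\lambda)$, giving $\tilde C_2\,\phi(\lambda)\sqrt{\lambda}$, while the constant $\nu_3 = B+A+2\sqrt{AB} = (\sqrt A+\sqrt B)^2$ arises from bounding $\|T^{1/2}g_\lambda(T_{\mathbf{x}})(T+\lambda I)^{1/2}\|$-type factors via \eqref{reg1}--\eqref{reg2}. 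I would present the needed perturbation lemma for $\phi$ of nearby operators as a standalone supplementary result (this is the one step where I expect to spend real effort and where the ``no operator-monotonicity'' improvement is localized), and then the two claimed inequalities follow by collecting terms and tracking constants.
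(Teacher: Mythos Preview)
Your overall decomposition and treatment of the sample/noise error match the paper. The gap is in your plan for the approximation error $\|r_{\lambda}(T_{\mathbf{x}})\phi(T)v\|_{\mathcal{H}_K}$. You propose to ``swap $\phi(T)$ for $\phi(T_{\mathbf{x}})$'' via a perturbation estimate for $\phi$ of nearby operators, and you flag this as the step where you expect to spend real effort. But this is precisely the step that \emph{cannot} be carried out for a general index function: controlling $\|\phi(T)-\phi(T_{\mathbf{x}})\|$ (even with $(T+\lambda I)^{-1/2}$ weights) for non-commuting $T,T_{\mathbf{x}}$ requires exactly the kind of regularity of $\phi$ (operator monotonicity, Lipschitz, or a product of the two) that the theorem is designed to dispense with. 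The qualification-covering inequality \eqref{qualification_covering_index} gives you pointwise comparisons $\phi(\sigma)\lesssim\phi(\lambda)(\sigma/\lambda)^{\nu}$ on the spectral scale, but that does not translate into an operator-difference bound when the two operators do not share a spectral calculus. So your proposed ``standalone supplementary result'' is, in the stated generality, not provable; if it were, the restriction to operator-monotone/Lipschitz $\phi$ in the prior literature would have been unnecessary all along.

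The paper's route sidesteps this entirely: it never compares $\phi(T)$ with $\phi(T_{\mathbf{x}})$. Instead it inserts $(T_{\mathbf{x}}+\lambda I)^{\nu}(T_{\mathbf{x}}+\lambda I)^{-\nu}$ next to $r_{\lambda}(T_{\mathbf{x}})$, uses the qualification bound $\|r_{\lambda}(T_{\mathbf{x}})(T_{\mathbf{x}}+\lambda I)^{\nu}\|\le 2^{\nu-1}(D+\gamma_{\nu})\lambda^{\nu}$, and then transfers $(T_{\mathbf{x}}+\lambda I)^{-\nu}$ to $(T+\lambda I)^{-\nu}$. The point is that $t\mapsto (t+\lambda)^{-\nu}$ is a \emph{nice} function: the fractional part $\nu-\nu'$ is handled by Cordes' inequality (Lemma \ref{powerscomp}), and the integer part $\nu'=\lfloor\nu\rfloor$ by the algebraic telescoping identity for $(A+\lambda I)^{-\nu'}-(B+\lambda I)^{-\nu'}$ (Lemma \ref{reducing}), which is where the constants $C_1,C_3$ with the $\nu'\|f_{\mathcal{H}}^{\dagger}\|$ factors actually originate --- they come from this approximation-error perturbation, not from the data term as you suggested. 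After this transfer one is left with $(T+\lambda I)^{-\nu}\phi(T)$, where both factors are functions of the \emph{same} operator $T$; the bound then reduces to the purely scalar estimate $\sup_{\sigma}\phi(\sigma)/(\sigma+\lambda)^{\nu}\le\max\{1,1/c\}\phi(\lambda)/\lambda^{\nu}$ (Lemma \ref{source_cover_qualification}), which uses only monotonicity of $\phi$ and the covering condition. That commuting reduction, not an operator-perturbation lemma for $\phi$, is the mechanism that makes the general-$\phi$ result go through.
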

\subsection{Proof of part(1) of Theorem \ref{mainresult}}
Observe that $$\|w_{\mathbf{z}}^{\lambda}- w_{H}\|^2_{H} = \|S_{\rho}w_{\mathbf{z}}^{\lambda} - S_{\rho}w_{H}\|^2_{\mathcal{H}_{K}} = \|\langle w_{\mathbf{z}}^{\lambda}, \cdot\rangle - \langle w_{H}, \cdot\rangle\|^2_{\mathcal{H}_{K}} =  \|f_{\mathbf{z}}^{\lambda}- f_{\mathcal{H}}^{\dagger}\|^2_{\mathcal{H}_{K}}.$$ Now we consider the error term
    \begin{equation*}
    \begin{split}
        \|f_{\mathbf{z}}^{\lambda}- f_{\mathcal{H}}^{\dagger}\|_{\mathcal{H}_{K}} \leq &  \underbrace{\|f_{\mathcal{H}}^{\dagger} - g_{\lambda}(T_{\mathbf{x}})T_{\mathbf{x}}f_{\mathcal{H}}^{\dagger}\|_{\mathcal{H}_{K}}}_{\textit{Term-1}} + \|g_{\lambda}(T_{\mathbf{x}})T_{\mathbf{x}}f_{\mathcal{H}}^{\dagger}-g_{\lambda}(T_{\mathbf{x}})S_{\mathbf{x}}^*y \|_{\mathcal{H}_{K}}.\\
    \end{split}
    \end{equation*}
Estimation of \textit{Term-1}: Consider
\begin{equation*}
    \begin{split}
         & \|f_{\mathcal{H}}^{\dagger} - g_{\lambda}(T_{\mathbf{x}})T_{\mathbf{x}}f_{\mathcal{H}}^{\dagger}\|_{\mathcal{H}_{K}} =  \|(I-g_{\lambda}(T_{\mathbf{x}})T_{\mathbf{x}})f_{\mathcal{H}}^{\dagger}\|_{\mathcal{H}_{K}}\\
        =& \|r_{\lambda}(T_{\mathbf{x}})\phi(T)v\|_{\mathcal{H}_{K}}
        = \|r_{\lambda}(T_{\mathbf{x}})(T_{\mathbf{x}}+\lambda I)^{\nu}(T_{\mathbf{x}}+\lambda I)^{-\nu}\phi(T)v\|_{\mathcal{H}_{K}}\\
        \stackrel{(*)}{\leq} & 2^{\nu-1} (D+\gamma_{\nu}) \lambda^{\nu} \|(T_{\mathbf{x}}+\lambda I)^{-\nu}\phi(T)v\|_{\mathcal{H}_{K}}\\
        = & 2^{\nu-1} (D+\gamma_{\nu}) \lambda^{\nu} \|(T_{\mathbf{x}}+\lambda I)^{-(\nu-\nu')}(T+\lambda I)^{(\nu-\nu')}(T+\lambda I)^{-(\nu-\nu')}(T_{\mathbf{x}}+\lambda I)^{-\nu'}\phi(T)v\|_{\mathcal{H}_{K}}\\
        \stackrel{(\dagger)}{\leq} & 2^{2\nu-\nu'-1} (D+\gamma_{\nu}) \lambda^{\nu} \|(T+\lambda I)^{-(\nu-\nu')}(T_{\mathbf{x}}+\lambda I)^{-\nu'}\phi(T)v\|_{\mathcal{H}_{K}}\\
        = & 2^{2\nu-\nu'-1} (D+\gamma_{\nu}) \lambda^{\nu} \|(T+\lambda I)^{-(\nu-\nu')}(T_{\mathbf{x}}+\lambda I)^{-\nu'}\phi(T)v \\
        & \qquad \qquad \qquad \qquad \qquad - (T+\lambda I)^{-\nu}\phi(T)v + (T+\lambda I)^{-\nu}\phi(T)v\|_{\mathcal{H}_{K}}\\
        \leq & 2^{2\nu-\nu'-1} (D+\gamma_{\nu})\left[\lambda^{\nu'} \underbrace{\|((T_{\mathbf{x}}+\lambda I)^{-\nu'}-(T+\lambda I)^{-\nu'})\phi(T)v\|_{\mathcal{H}_{K}}}_{\textit{Term-1(a)}} + \lambda^{\nu}\underbrace{\|(T+\lambda I)^{-\nu}\phi(T)v\|_{\mathcal{H}_{K}}}_{\textit{Term-1(b)}}\right],
    \end{split}
\end{equation*}
where we used $(\ref{reg3})$ and $(\ref{qualification})$ in $(*)$ and Lemma \ref{powerscomp} in $(\dagger)$.\\

\noindent
Bound of \textit{Term-1(a)}: Using Lemma \ref{reducing}, we get
\begin{equation*}
    \begin{split}
         & \|((T_{\mathbf{x}}+\lambda I)^{-\nu'}-(T+\lambda I)^{-\nu'})\phi(T)v\|_{\mathcal{H}_{K}} \\
        \leq & \|(T_{\mathbf{x}}+\lambda I)^{-(\nu'-1)}((T_{\mathbf{x}}+\lambda I)^{-1}-(T+\lambda I)^{-1})\phi(T)v\|_{\mathcal{H}_{K}}\\
        & \qquad \qquad \qquad \qquad + \|\sum_{i=1}^{\nu'-1}(T_{\mathbf{x}}+\lambda I)^{-i}(T-T_{\mathbf{x}})(T+\lambda I)^{-(\nu'+1-i)}\phi(T)v\|_{\mathcal{H}_{K}}\\
       \stackrel{(*)}{\leq} &\frac{1}{\lambda^{\nu'}}\|(T-T_{\mathbf{x}})(T+\lambda I)^{-1}\|_{\mathcal{L}(\mathcal{H}_{K})}\|f_{\mathcal{H}}^{\dagger}\|_{\mathcal{H}_{K}} + \sum_{i=1}^{\nu'-1}\frac{1}{\lambda^{i+\nu'-i}}\|(T-T_{\mathbf{x}})(T+\lambda I)^{-1}\|_{\mathcal{L}(\mathcal{H}_{K})}\|f_{\mathcal{H}}^{\dagger}\|_{\mathcal{H}_{K}}\\
        = & \frac{\nu'}{\lambda^{\nu'}}\|(T-T_{\mathbf{x}})(T+\lambda I)^{-1}\|_{\mathcal{L}(\mathcal{H}_{K})}\|f_{\mathcal{H}}^{\dagger}\|_{\mathcal{H}_{K}},
    \end{split}
\end{equation*}
where $(*)$ follows from the fact that $\lambda (T+\lambda I)^{-1} = I - T (T+ \lambda I)^{-1}$. Applying Lemma \ref{emp}, with a probability of at least $1 - \frac{\eta}{2}$, we have
\begin{equation*}
    \|((T_{\mathbf{x}}+\lambda I)^{-\nu'}-(T+\lambda I)^{-\nu'})\phi(T)v\|_{\mathcal{H}_{K}} \leq  \frac{2 \nu'  \|f_{\mathcal{H}}^{\dagger}\|_{\mathcal{H}_{K}
    }\log(4/\eta)}{\lambda^{\nu'}}\left(\frac{2 \kappa}{n \lambda}+ \sqrt{\frac{\kappa \mathcal{N}(\lambda)}{n \lambda}}\right). 
\end{equation*}
Bound of \textit{Term-1(b)}: Using Lemma \ref{source_cover_qualification}, we have
\begin{equation*}
    \begin{split}
        \|(T+\lambda I)^{-\nu}\phi(T)v\|_{\mathcal{H}_{K}} \leq & R \|(T+\lambda I)^{-\nu}\phi(T)\|_{\mathcal{L}(\mathcal{H}_{K})}\\
        \leq & R \sup_{0 \leq \sigma \leq \kappa^2 }\left|\frac{\phi(\sigma)}{(\sigma+\lambda)^{\nu}}\right| \leq \max\{1,1/c\} R \frac{\phi(\lambda)}{\lambda^{\nu}}.
    \end{split}
\end{equation*}
\noindent
Now from the analysis in \cite{rastogi2017optimal}, with probability at least $1-\frac{\eta}{2}$, we can see that
\begin{equation*}
    \|g_{\lambda}(T_{\mathbf{x}})T_{\mathbf{x}}f_{\mathcal{H}}^{\dagger}-g_{\lambda}(T_{\mathbf{x}})S_{\mathbf{x}}^*y\|_{\mathcal{H}_{K}} \leq 2\sqrt{2}(B+\sqrt{AB}) \left(\frac{\kappa M}{n\lambda} + \sqrt{\frac{\Sigma^2 \mathcal{N}(\lambda)}{n \lambda}}\right)\log\left(\frac{8}{\eta}\right).
\end{equation*}
Combining all the bounds, with probability at least $1-\eta$, we get
\begin{equation*}
    \|w_{\mathbf{z}}^{\lambda}- w_{H}\|^2_{H} \leq \tilde{C_{1}} \left(\frac{1}{n \lambda} + \sqrt{\frac{\mathcal{N}(\lambda)}{n \lambda}}\right)\log(8/\eta) + \tilde{C_{2}}\phi(\lambda).
\end{equation*}

\subsection{Proof of part(2) of Theorem \ref{mainresult}}
Observe that $$\|\mathcal{T}^{\frac{1}{2}}(w_{\mathbf{z}}^{\lambda}-w_{H})\|_{H} = \|S_{\rho}(w_{\mathbf{z}}^{\lambda}-w_{H})\|_{\rho} = \|f_{\mathbf{z}}^{\lambda}- f_{\mathcal{H}}^{\dagger}\|_{\rho}.$$ Now we consider the error term
\begin{equation*}
    \begin{split}
        \|f_{\mathbf{z}}^{\lambda}- f_{\mathcal{H}}^{\dagger}\|_{\rho}= &\|\sqrt{T}(f_{\mathbf{z}}^{\lambda}- f_{\mathcal{H}}^{\dagger})\|_{\mathcal{H}_{K}}\\
        \leq &  \underbrace{\|\sqrt{T}(f_{\mathcal{H}}^{\dagger} - g_{\lambda}(T_{\mathbf{x}})T_{\mathbf{x}}f_{\mathcal{H}}^{\dagger})\|_{\mathcal{H}_{K}}}_{\textit{Term-3}} + \|\sqrt{T}(g_{\lambda}(T_{\mathbf{x}})T_{\mathbf{x}}f_{\mathcal{H}}^{\dagger}-g_{\lambda}(T_{\mathbf{x}})S_{\mathbf{x}}^*y) \|_{\mathcal{H}_{K}}.\\
    \end{split}
    \end{equation*}
Again it follows from the analysis of \cite{rastogi2017optimal}, with probability at least $1-\frac{\eta}{2}$, we have
\begin{equation*}
    \|\sqrt{T}(g_{\lambda}(T_{\mathbf{x}})T_{\mathbf{x}}f_{\mathcal{H}}^{\dagger}-g_{\lambda}(T_{\mathbf{x}})S_{\mathbf{x}}^*y)\|_{\mathcal{H}_{K}} \leq 2\sqrt{2}\nu_{3} \left(\frac{\kappa M}{n\sqrt{\lambda}} + \sqrt{\frac{\Sigma^2 \mathcal{N}(\lambda)}{n}}\right)\log\left(\frac{8}{\eta}\right),
\end{equation*}
where $\nu_{3} = B+A+ 2 \sqrt{AB}$.\\

\noindent
Estimation of \textit{Term-3}: Let $\nu'' = \lfloor \nu-\frac{1}{2} \rfloor$. Consider
\begin{equation*}
    \begin{split}
        &\|\sqrt{T}(f_{\mathcal{H}}^{\dagger} - g_{\lambda}(T_{\mathbf{x}})T_{\mathbf{x}}f_{\mathcal{H}}^{\dagger})\|_{\mathcal{H}_{K}} = \|\sqrt{T}r_{\lambda}(T_{\mathbf{x}})f_{\mathcal{H}}^{\dagger}\|_{\mathcal{H}_{K}}\\
        \leq & \|\sqrt{T}(T+\lambda I)^{-\frac{1}{2}}\|_{\mathcal{L}(\mathcal{H}_{K})}\|(T+\lambda I)^{\frac{1}{2}}(T_{\mathbf{x}}+\lambda I)^{-\frac{1}{2}}\|_{\mathcal{L}(\mathcal{H}_{K})} \|(T_{\mathbf{x}}+\lambda I)^{\frac{1}{2}} r_{\lambda}(T_{\mathbf{x}})f_{\mathcal{H}}^{\dagger}\|_{\mathcal{H}_{K}}\\
        \stackrel{(*)}{\leq} & \sqrt{2} \|(T_{\mathbf{x}}+\lambda I)^{\frac{1}{2}} r_{\lambda}(T_{\mathbf{x}})(T_{\mathbf{x}}+\lambda I)^{\nu-\frac{1}{2}}(T_{\mathbf{x}}+\lambda I)^{-(\nu-\frac{1}{2})}f_{\mathcal{H}}^{\dagger}\|_{\mathcal{H}_{K}}\\
        \stackrel{(\dagger)}{\leq} & \sqrt{2} 2^{\nu-1} (D + \gamma_{\nu}) \lambda^{\nu}\|(T_{\mathbf{x}}+\lambda I)^{-(\nu-\frac{1}{2})}\phi(T)\nu\|_{\mathcal{H}_{K}},
    \end{split}
\end{equation*}
where we used Lemma \ref{powerscomp} in $(*)$ and $(\ref{reg3})$, $(\ref{qualification})$ in $(\dagger)$.\\

\noindent
Following the similar steps of Estimation of \textit{Term-1}, with probability at least $1-\frac{\eta}{2}$, we have
\begin{equation*}
    \begin{split}
        \|\sqrt{T}(f_{\mathcal{H}}^{\dagger} - g_{\lambda}(T_{\mathbf{x}})T_{\mathbf{x}}f_{\mathcal{H}}^{\dagger})\|_{\mathcal{H}_{K}} \leq \tilde{C_{2}}\phi(\lambda) \sqrt{\lambda} + \left(\frac{C_{1}}{n \sqrt{\lambda}}+ C_{3}\sqrt{\frac{ \mathcal{N}(\lambda)}{n }}\right)\log(4/\eta).
    \end{split}
\end{equation*}
Combining all the bounds, with probability at least $1-\eta$, we get
\begin{equation*}
    \begin{split}
        \|\mathcal{T}^{\frac{1}{2}}(w_{\mathbf{z}}^{\lambda}-w_{H})\|_{H} \leq \tilde{C_{3}}\left(\frac{1}{n \sqrt{\lambda}}+ \sqrt{\frac{\mathcal{N(\lambda)}}{n}}\right) \log(8/\eta) + \tilde{C_{2}}\phi(\lambda) \sqrt{\lambda}.
    \end{split}
\end{equation*}

\begin{corollary}\label{optimal_rates}
    Suppose Assumptions \ref{as:1}-\ref{as:4} hold and $0< \eta <1$. Assume that $\lambda \in (0,1], \lambda = \Psi^{-1}(n^{-\frac{1}{2}})$, where $\Psi(t) = t^{\frac{1}{2}+\frac{1}{2b}}\phi(t)$.
    \begin{enumerate}
    \item If the index function $\phi$ is covered by the qualification of the regularization family, then with probability at least $1-\eta$, we have
    \begin{equation*}
         \|w_{\mathbf{z}}^{\lambda}- w_{H}\|_{H} \leq c_{1} \phi(\Psi^{-1}(n^{-\frac{1}{2}}))\log(8/\eta),
    \end{equation*}
    for some positive constant $c_{1}$,
    \item If the function $\sqrt{t} \phi(t)$ is covered by the qualification of the regularization family, then with probability at least $1-\eta$, we have
    \begin{equation*}
    \|\mathcal{T}^{\frac{1}{2}}(w_{\mathbf{z}}^{\lambda}- w_{H})\|_{H} \leq c_{2}(\Psi^{-1}(n^{-\frac{1}{2}}))^{\frac{1}{2}}\phi(\Psi^{-1}(n^{-\frac{1}{2}}))\log(8/\eta),
\end{equation*}
for some positive constant $c_{2}$.
    \end{enumerate}
\end{corollary}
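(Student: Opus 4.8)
The plan is to substitute the prescribed parameter choice $\lambda=\lambda_n:=\Psi^{-1}(n^{-1/2})$ into the two estimates of Theorem~\ref{mainresult}, invoke the polynomial eigenvalue decay of Assumption~\ref{as:4} through the effective-dimension bound $(\ref{effectivedimention})$, and check that after this substitution every term on the right-hand side is dominated by the claimed rate $\phi(\Psi^{-1}(n^{-1/2}))$ (resp. $(\Psi^{-1}(n^{-1/2}))^{1/2}\phi(\Psi^{-1}(n^{-1/2}))$). The statement is thus reduced to a bias--variance balancing computation.

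First I would record the consequences of the definition of $\lambda_n$. Since $\Psi(t)=t^{\frac12+\frac1{2b}}\phi(t)$ is continuous, strictly increasing near $0$ and $\Psi(0)=0$, the value $\lambda_n=\Psi^{-1}(n^{-1/2})$ is well defined for $n$ large, $\lambda_n\to 0$, hence $\lambda_n\in(0,1]$ and $\phi(\lambda_n)\to\phi(0)=0$. The defining identity is
$$n^{-1/2}=\lambda_n^{\frac12+\frac1{2b}}\phi(\lambda_n),\qquad\text{equivalently}\qquad n^{-1}=\lambda_n^{1+\frac1b}\phi(\lambda_n)^2 .$$
Combining this with $\mathcal{N}(\lambda_n)\lesssim\lambda_n^{-1/b}$ from $(\ref{effectivedimention})$ yields the two master estimates
$$\sqrt{\frac{\mathcal{N}(\lambda_n)}{n}}\lesssim \lambda_n^{-\frac1{2b}}\,n^{-1/2}=\lambda_n^{1/2}\phi(\lambda_n),\qquad \frac1{n\lambda_n}=\lambda_n^{1/b}\phi(\lambda_n)^2=\bigl(\lambda_n^{1/b}\phi(\lambda_n)\bigr)\phi(\lambda_n)\lesssim\phi(\lambda_n),$$
where in the last step the factor $\lambda_n^{1/b}\phi(\lambda_n)$ is bounded (indeed it tends to $0$) because $\lambda_n\to0$ and $\phi(\lambda_n)\to0$.

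Next I would verify that the admissibility condition $\sqrt{\lambda}\ge 16\kappa\sqrt{\mathcal{N}(\lambda)/n}\log(4/\eta)$ of Theorem~\ref{mainresult} holds for $\lambda=\lambda_n$ once $n$ is large: by the first master estimate this is implied by $1\gtrsim\phi(\lambda_n)\log(4/\eta)$, which holds for $n$ sufficiently large (depending on $\eta$) since $\phi(\lambda_n)\to0$. Granting this, for part~(1) I substitute $\lambda=\lambda_n$ in the bound of Theorem~\ref{mainresult}(1): the term $\tilde C_2\phi(\lambda_n)$ is exactly the target rate; $\sqrt{\mathcal{N}(\lambda_n)/(n\lambda_n)}=\lambda_n^{-1/2}\sqrt{\mathcal{N}(\lambda_n)/n}\lesssim\phi(\lambda_n)$ by the first master estimate; and $1/(n\lambda_n)\lesssim\phi(\lambda_n)$ by the second. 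Summing and absorbing all constants (and the finitely many small $n$) into a single $c_1$ gives the claim. For part~(2) the argument is identical, applied to the bound of Theorem~\ref{mainresult}(2): $\tilde C_2\phi(\lambda_n)\sqrt{\lambda_n}$ is the target rate; $\sqrt{\mathcal{N}(\lambda_n)/n}\lesssim\lambda_n^{1/2}\phi(\lambda_n)$ is again the first master estimate; and $1/(n\sqrt{\lambda_n})=\lambda_n^{1/2}\cdot\frac1{n\lambda_n}\lesssim\lambda_n^{1/2}\phi(\lambda_n)$ by the second, giving the bound with constant $c_2$.

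I do not anticipate a genuine obstacle; the content is the balancing calculation above. The only points requiring care are: (a) ensuring $n$ is large enough — depending on $\eta$ — for both $\lambda_n\in(0,1]$ and the admissibility condition of Theorem~\ref{mainresult} to hold, so that the probabilistic estimate there is actually available (for the remaining small $n$ one simply enlarges $c_1,c_2$); and (b) keeping track of where the hypotheses are used, namely $b>1$ and the index-function property of $\phi$ enter precisely in the effective-dimension bound and in the boundedness of $\lambda_n^{1/b}\phi(\lambda_n)$, which are what make the two variance terms collapse onto the bias term.
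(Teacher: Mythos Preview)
Your proposal is correct and is precisely the argument the paper has in mind: the corollary is stated without proof in the paper, as an immediate consequence of Theorem~\ref{mainresult} together with the effective-dimension bound $(\ref{effectivedimention})$, and your substitution $\lambda=\Psi^{-1}(n^{-1/2})$ followed by the balancing computation $\sqrt{\mathcal{N}(\lambda_n)/(n\lambda_n)}\lesssim\phi(\lambda_n)$, $1/(n\lambda_n)=\lambda_n^{1/b}\phi(\lambda_n)^2\lesssim\phi(\lambda_n)$ (and the analogous estimates for part~(2)) is exactly that consequence spelled out. Your remarks on the admissibility condition and on handling finitely many small $n$ are the only details one needs to add, and they are handled correctly.
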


\begin{remark}
From Corollary \ref{optimal_rates}, it is evident that the convergence rates derived in this analysis are optimal, as they align with the lower bounds established in \cite{rastogi2017optimal}. By eliminating additional conditions on the index function, this work encompasses all scenarios, whether or not the index function can be expressed as a product of an operator monotone and a Lipschitz function. Consequently, this analysis extends and generalizes all the existing results in the framework discussed in \cite{devito, sergeionregularization, rastogi2017optimal, shuai_2020_balancing,lin2020optimalspectral}.
\end{remark}

In our analysis, we primarily work with the linear reproducing kernel which provides a bridge between the FLR model and the classical learning theory framework. We solve problems $(\ref{expected_risk})$ and $(\ref{expected_normal})$ simultaneously by utilizing this connection given by the structure of the linear kernel. However, our approach is not restricted to the linear kernel alone. We can see from Corollary \ref{general_framework} that the same theoretical framework and techniques can be applied to any reproducing kernel $K$ that satisfies the necessary assumptions. Specifically, if $K$ is a Mercer kernel with eigenvalues $\mu_{i},~ i \in \mathbb{N}$ satisfying appropriate polynomial decay conditions, then our results remain valid. This generalization is crucial as it ensures that our theoretical findings are not merely confined to a special case but rather extend to a broad class of kernels used in learning theory framework.

\begin{corollary}\label{general_framework}
    Let $K:H \times H \to \mathbb{R}$ be a Mercer kernel such that $\sqrt{K(x,x)}\leq \kappa,~\rho_{H}-$ almost surely with RKHS $(\mathcal{H}, \langle \cdot, \cdot\rangle_{\mathcal{H}})$. Suppose Assumptions \ref{as:1}-\ref{as:4} hold and $0 < \eta <1$. Assume that $\lambda \in (0,1],~\lambda = \Psi^{-1}(n^{-\frac{1}{2}})$, where $\Psi(t) = t^{\frac{1}{2}+\frac{1}{2b}}\phi(t)$.
\begin{enumerate}
    \item If the index function $\phi$ is covered by the qualification of the regularization family, then with probability at least $1-\eta$, we have
    \begin{equation*}
         \|f_{\mathbf{z}}^{\lambda}- f_{\mathcal{H}}^{\dagger}\|_{\mathcal{H}} \leq c_{1} \phi(\Psi^{-1}(n^{-\frac{1}{2}}))\log(8/\eta),
    \end{equation*}
    for some positive constant $c_{1}$,
    \item If the function $\sqrt{t} \phi(t)$ is covered by the qualification of the regularization family, then with probability at least $1-\eta$, we have
    \begin{equation*}
    \|f_{\mathbf{z}}^{\lambda}- f_{\mathcal{H}}^{\dagger}\|_{\rho_{H}} \leq c_{2}(\Psi^{-1}(n^{-\frac{1}{2}}))^{\frac{1}{2}}\phi(\Psi^{-1}(n^{-\frac{1}{2}}))\log(8/\eta),
\end{equation*}
for some positive constant $c_{2}$.
    \end{enumerate}
\end{corollary}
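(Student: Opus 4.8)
\subsection{Proof of Corollary \ref{general_framework}}

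The plan is to observe that Corollary \ref{general_framework} is nothing more than Theorem \ref{mainresult} combined with the specialization performed in Corollary \ref{optimal_rates}, once one notices that the proof of Theorem \ref{mainresult} is insensitive to the choice of the linear reproducing kernel. Concretely, every estimate in that proof --- the splitting of $\|f_{\mathbf z}^{\lambda}-f_{\mathcal H}^{\dagger}\|_{\mathcal H_K}$ into \emph{Term-1} and the sample-error part, the refinement into \emph{Term-1(a)}/\emph{Term-1(b)}, the treatment of \emph{Term-3}, the operator-power comparisons, and the concentration bounds borrowed from \cite{rastogi2017optimal} --- is expressed purely through the operators $T=I_K^*I_K$, $T_{\mathbf x}=S_{\mathbf x}^*S_{\mathbf x}$, $S_{\mathbf x}$, the regularization family $g_{\lambda}$ via \eqref{reg1}--\eqref{qualification}, the source representation $f_{\mathcal H}^{\dagger}=\phi(T)v$ with $\|v\|_{\mathcal H}\le R$, the effective dimension $\mathcal N(\lambda)$, and the uniform diagonal bound $\sqrt{K(x,x)}\le\kappa$ (which replaces $\langle w,w'\rangle_H\le\kappa^2$ and enters only through Lemma \ref{emp} and the imported sample-error estimate). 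None of these objects requires $K(w,w')=\langle w,w'\rangle_H$; they use only that $K$ is a Mercer kernel with bounded diagonal and that Assumptions \ref{as:1}--\ref{as:3} hold. Therefore, rerunning the proof of Theorem \ref{mainresult} with $\mathcal H_K$ replaced by the abstract RKHS $\mathcal H$ yields, under the stated hypotheses and sample-size condition, with probability at least $1-\eta$,
\begin{equation*}
\|f_{\mathbf z}^{\lambda}-f_{\mathcal H}^{\dagger}\|_{\mathcal H}\le \tilde C_1\Bigl(\tfrac{1}{n\lambda}+\sqrt{\tfrac{\mathcal N(\lambda)}{n\lambda}}\Bigr)\log(8/\eta)+\tilde C_2\,\phi(\lambda)
\end{equation*}
when $\phi$ is covered by the qualification, and the analogous inequality with $\|f_{\mathbf z}^{\lambda}-f_{\mathcal H}^{\dagger}\|_{\rho_H}=\|\sqrt T(f_{\mathbf z}^{\lambda}-f_{\mathcal H}^{\dagger})\|_{\mathcal H}$ on the left and $\tilde C_3\bigl(\tfrac{1}{n\sqrt\lambda}+\sqrt{\tfrac{\mathcal N(\lambda)}{n}}\bigr)\log(8/\eta)+\tilde C_2\,\phi(\lambda)\sqrt\lambda$ on the right when $\sqrt t\,\phi(t)$ is covered.

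Next I would invoke Assumption \ref{as:4}, exactly as in the proof of Corollary \ref{optimal_rates}. By \eqref{effectivedimention} we have $\mathcal N(\lambda)\lesssim\lambda^{-1/b}$, so with the choice $\lambda=\Psi^{-1}(n^{-1/2})$, i.e.\ $\lambda^{1/2+1/(2b)}\phi(\lambda)=n^{-1/2}$, one gets
\begin{equation*}
\sqrt{\frac{\mathcal N(\lambda)}{n\lambda}}\ \lesssim\ \frac{\lambda^{-1/(2b)-1/2}}{\sqrt n}\ =\ \frac{1}{\sqrt n\,\lambda^{1/2+1/(2b)}}\ =\ \phi(\lambda)\ =\ \phi\bigl(\Psi^{-1}(n^{-1/2})\bigr),
\end{equation*}
and likewise $\sqrt{\mathcal N(\lambda)/n}\lesssim\lambda^{1/2}\phi(\lambda)$. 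The admissibility requirement $\sqrt\lambda\ge 16\kappa\sqrt{\mathcal N(\lambda)/n}\,\log(4/\eta)$ is met for all $n$ large with this $\lambda$, and squaring it shows $\mathcal N(\lambda)/(n\lambda)\lesssim 1$; since also $\mathcal N(\lambda)\gtrsim 1$, this gives $\tfrac{1}{n\lambda}\lesssim\tfrac{\mathcal N(\lambda)}{n\lambda}\lesssim\sqrt{\mathcal N(\lambda)/(n\lambda)}$, and similarly $\tfrac{1}{n\sqrt\lambda}\lesssim\sqrt{\mathcal N(\lambda)/n}$, so those terms are absorbed. Collecting the pieces produces exactly the two asserted rates, with $c_1,c_2$ depending on $\tilde C_1,\tilde C_2,\tilde C_3$, $\kappa$, $b$ and the implicit constant in Assumption \ref{as:4}.

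The only genuinely non-mechanical point --- and the step I would be most careful about --- is the assertion that the proof of Theorem \ref{mainresult} is kernel-agnostic. In particular I would verify that Assumption \ref{as:3}, written for the linear kernel in terms of $\langle w_H,w\rangle_H$, is to be read in the general case as the Bernstein-type condition on $|y-f_{\mathcal H}^{\dagger}(w)|$ (consistent with $\langle w_H,w\rangle_H=f_{\mathcal H}^{\dagger}(w)$ in the linear setting), and that each occurrence of $\|f_{\mathcal H}^{\dagger}\|_{\mathcal H_K}$, $\kappa^2$, and $\mathcal L(\mathcal H_K)$ in the constants and in Lemmas \ref{powerscomp}, \ref{reducing}, \ref{emp}, \ref{source_cover_qualification} transfers verbatim to $\|f_{\mathcal H}^{\dagger}\|_{\mathcal H}$, $\kappa^2$, and $\mathcal L(\mathcal H)$. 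Granting this bookkeeping --- immediate on inspecting those auxiliary lemmas and the cited concentration inequalities, none of which reference the kernel beyond its Mercer property and diagonal bound --- the corollary follows, and I expect no further obstacle.
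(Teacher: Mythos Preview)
Your proposal is correct and matches the paper's own treatment: the paper does not give a separate proof of Corollary \ref{general_framework} but simply remarks that the argument for Theorem \ref{mainresult} (and hence Corollary \ref{optimal_rates}) is kernel-agnostic and carries over verbatim to any Mercer kernel with bounded diagonal. Your write-up is in fact more detailed than the paper's, since you explicitly check the balancing computation $\sqrt{\mathcal N(\lambda)/(n\lambda)}\lesssim\phi(\lambda)$ for the choice $\lambda=\Psi^{-1}(n^{-1/2})$ and flag the reinterpretation of Assumption \ref{as:3}; the paper leaves all of this implicit.
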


\noindent
We now outline the lemmas utilized in the proof of the main theorem.

\begin{lemma}\cite{devito}\label{emp}
Let $n \in \mathbb{N}$ and $0 < \eta <1$, then with probability at least $1-\frac{\eta}{2}$, we have
    \begin{equation*}
        \|(T+\lambda I)^{-1}(T-T_{\mathbf{x}})\|_{\mathcal{L}(\mathcal{H}_{K})} \leq 2 \log(4/\eta)\left(\frac{2 \kappa}{n \lambda}+ \sqrt{\frac{\kappa \mathcal{N}(\lambda)}{n \lambda}}\right).
    \end{equation*}
\end{lemma}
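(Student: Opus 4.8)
\textbf{Proof proposal for Lemma \ref{emp}.}

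The plan is to reduce the operator-valued deviation $\|(T+\lambda I)^{-1}(T-T_{\mathbf{x}})\|_{\mathcal{L}(\mathcal{H}_K)}$ to a sum of i.i.d.\ zero-mean random operators and then apply a Bernstein-type concentration inequality for Hilbert-space–valued (or Hilbert–Schmidt operator–valued) random variables. First I would write $T - T_{\mathbf{x}} = \frac{1}{n}\sum_{i=1}^n \xi_i$, where $\xi_i = T - \langle \cdot, \langle w_i, \cdot\rangle_H\rangle_{\mathcal{H}_K}\langle w_i, \cdot\rangle_H$ is the difference between the population second-moment operator and the rank-one operator built from the $i$-th sample; since the $w_i$ are i.i.d.\ according to $\rho_H$, the $\xi_i$ are i.i.d.\ with $\mathbb{E}[\xi_i] = 0$. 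Multiplying on the left by $(T+\lambda I)^{-1}$ (which is deterministic) gives $(T+\lambda I)^{-1}(T-T_{\mathbf{x}}) = \frac{1}{n}\sum_{i=1}^n \zeta_i$ with $\zeta_i = (T+\lambda I)^{-1}\xi_i$, still i.i.d.\ and mean zero.

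Next I would establish the two ingredients needed for the Bernstein bound: a uniform (almost sure) bound on $\|\zeta_i\|$ and a bound on the ``variance'' $\|\mathbb{E}[\zeta_i^*\zeta_i]\|$ (or $\|\mathbb{E}[\zeta_i \zeta_i^*]\|$). For the uniform bound, using $\langle w_i, w_i\rangle_H \le \kappa^2$ almost surely, the rank-one term has operator norm at most $\kappa^2$, so $\|\xi_i\| \le 2\kappa^2$ (or one can be slightly sharper), and multiplying by $(T+\lambda I)^{-1}$ costs a factor $\lambda^{-1}$, giving $\|\zeta_i\| \lesssim \kappa^2/\lambda$; a more careful splitting of $(T+\lambda I)^{-1}$ as $(T+\lambda I)^{-1/2}(T+\lambda I)^{-1/2}$ and absorbing one half-power into the rank-one operator yields the sharper $\|\zeta_i\| \lesssim \kappa/\lambda$ that is consistent with the $\kappa/(n\lambda)$ term in the statement. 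For the variance, $\mathbb{E}[\zeta_i^*\zeta_i] \preceq (T+\lambda I)^{-1}\,\mathbb{E}[\xi_i^*\xi_i]\,(T+\lambda I)^{-1}$, and $\mathbb{E}[\xi_i^*\xi_i] \preceq \mathbb{E}\big[\langle w_i,w_i\rangle_H\, (\text{rank-one})\big] \preceq \kappa^2 T$; hence $\|\mathbb{E}[\zeta_i^*\zeta_i]\| \lesssim \kappa^2 \|(T+\lambda I)^{-1} T (T+\lambda I)^{-1}\| \le \kappa^2 \|(T+\lambda I)^{-1}\|\cdot\|T(T+\lambda I)^{-1}\|$, but it is cleaner to bound it in terms of the trace, recognizing $\mathrm{trace}\big((T+\lambda I)^{-1}T(T+\lambda I)^{-1}\big) \le \lambda^{-1}\mathcal{N}(\lambda)$, so the effective variance is of order $\kappa^2 \mathcal{N}(\lambda)/\lambda$. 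Plugging these into the Pinelis–Bernstein inequality for i.i.d.\ Hilbert-space–valued variables (as in \cite{devito}) gives, with probability $\ge 1-\eta/2$, a bound of the shape $\frac{2\kappa}{n\lambda}\log(4/\eta) + \sqrt{\frac{\kappa\mathcal{N}(\lambda)}{n\lambda}}\log(4/\eta)$, which is exactly the claimed inequality once constants are tracked; note the statement as written bounds $\|(T+\lambda I)^{-1}(T-T_{\mathbf{x}})\|$ rather than its adjoint, and since $\|U\| = \|U^*\|$ one may work with whichever of $\zeta_i^*\zeta_i$ or $\zeta_i\zeta_i^*$ has the more convenient variance.

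The main obstacle I anticipate is bookkeeping the half-powers of $(T+\lambda I)$ so as to land on the sharp $\kappa$ (rather than $\kappa^2$) in the leading term and the correct $\mathcal{N}(\lambda)$ weighting in the variance term — this requires carefully distributing $(T+\lambda I)^{-1}$ and exploiting that $\langle w_i,w_i\rangle_H \le \kappa^2$ to pull one power of $\kappa$ out of the rank-one operators before taking expectations — together with invoking the correct vector/operator Bernstein inequality with the right constants; all of this, however, is already essentially contained in \cite{devito}, so the proof is largely a matter of citing and assembling that machinery.
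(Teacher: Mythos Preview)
The paper does not supply its own proof of this lemma; it is stated with a bare citation to \cite{devito} and no argument is given. Your sketch --- writing $(T+\lambda I)^{-1}(T-T_{\mathbf{x}})$ as an average of i.i.d.\ zero-mean Hilbert--Schmidt operator-valued random variables, establishing an almost-sure bound of order $\kappa/\lambda$ (or $\kappa^2/\lambda$) and a second-moment bound of order $\kappa^2\mathcal{N}(\lambda)/\lambda$, and then invoking a Pinelis--Bernstein inequality --- is precisely the standard route taken in \cite{devito} and the subsequent learning-theory literature, so there is nothing in the present paper to compare against beyond noting that your proposal matches the cited source. Your caveat about tracking the exact power of $\kappa$ is well placed: the constant $2\kappa/(n\lambda)$ in the stated bound is inherited from the specific formulation in \cite{devito}, and reproducing it exactly does require the careful half-power splitting you allude to (or working in Hilbert--Schmidt norm from the outset), but this is indeed bookkeeping rather than a conceptual gap.
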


\begin{lemma}\label{source_cover_qualification}
Let $s > \kappa^2$ and $\phi: [0,s] \to \mathbb{R}$ be an index function. Then 
\begin{equation*}
    \sup_{0 \le \sigma \leq s} \left|\frac{\varphi(\sigma)}{(\sigma+\lambda)^{\nu}}\right| \leq \max\{1,1/c\}\frac{\varphi(\lambda)}{\lambda^{\nu}}, ~~ \forall \lambda \in (0,s].
\end{equation*}
\end{lemma}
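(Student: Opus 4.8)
The plan is to cut the interval $[0,s]$ at the point $\sigma=\lambda$ and estimate the two resulting pieces separately. The hypothesis in force here is the one named in the lemma's label: the qualification $\nu$ of $\{g_{\lambda}\}$ covers $\phi$, i.e.\ the covering inequality $(\ref{qualification_covering_index})$ holds with $\nu_{0}=\nu$ and with the same constant $c$ that appears in the statement. First I would record the trivial normalisations: since $\phi$ is an index function, $\phi(\sigma)\ge\phi(0)=0$, and $(\sigma+\lambda)^{\nu}>0$ for every $\sigma\in[0,s]$ and $\lambda\in(0,s]$, so the absolute value in the claim may be dropped and it suffices to bound $\sup_{0\le\sigma\le s}\phi(\sigma)/(\sigma+\lambda)^{\nu}$.

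On the lower piece $0\le\sigma\le\lambda$ I would simply use monotonicity: $\sigma+\lambda\ge\lambda$ forces $(\sigma+\lambda)^{\nu}\ge\lambda^{\nu}$, while $\phi(\sigma)\le\phi(\lambda)$ because $\phi$ is nondecreasing; hence $\phi(\sigma)/(\sigma+\lambda)^{\nu}\le\phi(\lambda)/\lambda^{\nu}$ throughout this range. On the upper piece $\lambda\le\sigma\le s$ I would first discard $\lambda$ from the denominator, using $(\sigma+\lambda)^{\nu}\ge\sigma^{\nu}$, to reduce the quantity to $\phi(\sigma)/\sigma^{\nu}$, and then invoke the covering condition with $t=\lambda$ (admissible since $0<\lambda\le s$):
\[
c\,\frac{\lambda^{\nu}}{\phi(\lambda)}\ \le\ \inf_{\lambda\le\tau\le s}\frac{\tau^{\nu}}{\phi(\tau)}\ \le\ \frac{\sigma^{\nu}}{\phi(\sigma)},
\]
which rearranges to $\phi(\sigma)/\sigma^{\nu}\le(1/c)\,\phi(\lambda)/\lambda^{\nu}$ for all $\sigma$ in this range.

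Combining the two estimates, the supremum over $[0,s]$ is at most $\max\{1,1/c\}\,\phi(\lambda)/\lambda^{\nu}$, which is precisely the asserted bound. The argument is elementary, and I do not expect a genuine obstacle; the only point requiring care is the bookkeeping of hypotheses — one must recognise that the constant $c$ in the statement is the covering constant from $(\ref{qualification_covering_index})$, since nothing else among the standing assumptions controls the growth of $\phi(\sigma)/\sigma^{\nu}$ as $\sigma$ ranges up to $s$, and that control is exactly what the upper piece of the split needs. (I would also note in passing that the $\varphi$ appearing in the displayed inequality is a typo for the index function $\phi$ of the statement.)
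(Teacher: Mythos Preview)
Your proof is correct and follows essentially the same route as the paper's: split $[0,s]$ at $\sigma=\lambda$, use monotonicity of $\phi$ and $(\sigma+\lambda)^{\nu}\ge\lambda^{\nu}$ on the lower piece, and use $(\sigma+\lambda)^{\nu}\ge\sigma^{\nu}$ together with the covering inequality $(\ref{qualification_covering_index})$ on the upper piece. Your remarks that the constant $c$ is the covering constant and that $\varphi$ is a typo for $\phi$ are both accurate and worth noting, since the lemma as stated omits the covering hypothesis explicitly.
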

\begin{proof}
    First we consider the case when $0 \leq \sigma < \lambda$
\begin{equation*}
    \begin{split}
        \sup_{0 \leq \sigma < \lambda} \left|\frac{\varphi(\sigma)}{(\sigma+\lambda)^{\nu}}\right| \leq \sup_{0 \leq \sigma < \lambda} \left| \frac{\varphi(\sigma)}{\lambda^{\nu}} \right| \leq \frac{\varphi(\lambda)}{\lambda^{\nu}}.
    \end{split}    
\end{equation*}
Next, we take the case when $\lambda \leq \sigma \leq s$
\begin{equation*}
    \begin{split}
         \sup_{\lambda \leq \sigma \leq s} \left|\frac{\varphi(\sigma)}{(\sigma+\lambda)^{\nu}}\right| = & \sup_{\lambda \leq \sigma \leq s} \left|\frac{\varphi(\sigma)}{(\sigma+\lambda)^{\nu}} \frac{\sigma^{\nu}}{\sigma^{\nu}}\right|\\
         \leq & \sup_{\lambda \leq \sigma \leq s}\left|\frac{\varphi(\sigma)}{\sigma^{\nu}} \right| \leq \frac{1}{c} \frac{\varphi(\lambda)}{\lambda^{\nu}}.
    \end{split}
\end{equation*}
\end{proof}

\begin{lemma}\cite[Lemma 5.1]{cordes1987}
\label{cordes}
Suppose $T_1$ and $T_2$ are two positive bounded linear operators on a separable Hilbert space. Then
$$\|T_1^pT_2^p\| \leq \|T_1T_2\|^p, \text{ when } 0\leq p \leq 1. $$
\end{lemma}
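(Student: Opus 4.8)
The plan is to reduce the inequality to a single nontrivial ingredient, the Löwner--Heinz operator monotonicity of $t\mapsto t^p$ on $[0,\infty)$ for $0\le p\le 1$, and to obtain everything else by elementary manipulations of positive operators. By homogeneity (scale $T_1\mapsto \alpha T_1$) we may assume $\|T_1T_2\|=1$, so the claim becomes $\|T_1^pT_2^p\|\le 1$. Since $(T_1^pT_2^p)^*=T_2^pT_1^p$, we have $\|T_1^pT_2^p\|=\|T_2^pT_1^p\|$ and $\|T_2^pT_1^p\|^2=\|(T_2^pT_1^p)^*(T_2^pT_1^p)\|=\|T_1^pT_2^{2p}T_1^p\|$, so it suffices to prove the operator inequality $T_1^pT_2^{2p}T_1^p\le I$.

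First I would treat the case where $T_1$ is invertible. From $\|T_1T_2\|=1$ we get $\|(T_1T_2)(T_1T_2)^*\|=\|T_1T_2^2T_1\|=1$, and since $T_1T_2^2T_1$ is positive this gives $T_1T_2^2T_1\le I$. Conjugating by $T_1^{-1}$ yields $T_2^2\le T_1^{-2}$. Now apply the Löwner--Heinz inequality with exponent $p\in[0,1]$ to $0\le T_2^2\le T_1^{-2}$ to obtain $T_2^{2p}\le (T_1^{-2})^p=T_1^{-2p}$, and then conjugate by $T_1^p$ to get $T_1^pT_2^{2p}T_1^p\le T_1^pT_1^{-2p}T_1^p=I$, which is exactly what is needed. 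Reading the computation backwards gives $\|T_1^pT_2^p\|\le 1=\|T_1T_2\|^p$.

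To remove invertibility I would run the previous step with $T_1$ replaced by $T_1+\varepsilon I$ for $\varepsilon>0$, which is positive and invertible, yielding $\|(T_1+\varepsilon I)^pT_2^p\|\le \|(T_1+\varepsilon I)T_2\|^p$, and then let $\varepsilon\to 0^+$. This passage to the limit uses continuity of $A\mapsto A^p$ in operator norm on the set of positive operators bounded by $\|T_1\|+1$ (either from the functional calculus, or from uniform polynomial approximation of $t^p$ on a compact interval), together with continuity of the operator norm and of $A\mapsto AT_2$.

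The main obstacle is the Löwner--Heinz inequality itself, i.e.\ that $0\le A\le B$ implies $A^p\le B^p$ for $0\le p\le 1$. If one does not wish to quote it directly, the cleanest self-contained route is the representation $t^p=\frac{\sin(\pi p)}{\pi}\int_0^\infty \frac{t}{t+s}\,s^{p-1}\,ds$ for $t\ge 0$, combined with the elementary facts that $0<A\le B$ implies $B^{-1}\le A^{-1}$ and hence $\frac{A}{A+s}=I-s(A+sI)^{-1}\le I-s(B+sI)^{-1}=\frac{B}{B+s}$, so that integrating against the positive measure $s^{p-1}\,ds$ preserves the inequality. All remaining steps are routine, so the bulk of the work is either this classical lemma or simply citing \cite{cordes1987} for the statement as given.
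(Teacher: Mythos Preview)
The paper does not prove this lemma; it is simply quoted from \cite{cordes1987}. Your argument is correct and follows the standard route to the Cordes inequality: reduce by homogeneity to $\|T_1T_2\|=1$, rewrite the claim as the operator inequality $T_1^pT_2^{2p}T_1^p\le I$, derive $T_2^2\le T_1^{-2}$ from $T_1T_2^2T_1\le I$ when $T_1$ is invertible, apply the L\"owner--Heinz theorem to obtain $T_2^{2p}\le T_1^{-2p}$, conjugate back, and finally approximate $T_1$ by $T_1+\varepsilon I$ in the general case using norm-continuity of the functional calculus. The integral-representation derivation of L\"owner--Heinz you sketch is also correct. Two minor points worth recording if you write this up: the degenerate case $\|T_1T_2\|=0$ should be disposed of separately before normalizing (it follows since $\ker T_1=\ker T_1^p$ and $\overline{\mathrm{ran}\,T_2}=\overline{\mathrm{ran}\,T_2^p}$), and the integral formula for $t^p$ is valid only for $0<p<1$, the endpoints $p\in\{0,1\}$ being trivial. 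Neither affects the correctness of your proposal, which supplies strictly more than the paper does here.
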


\begin{lemma}\cite[Lemma A.3]{gupta2024optimal}
  \label{reducing}
  Let $\Lambda$ and $\hat{\Lambda}_{n}$ be positive operators from $H_{1}$ to $H_{2}$, then for any $n \geq 1$, we have
    \begin{equation*}
    \begin{split}
        (\hat{\Lambda}_{n}+\lambda I )^{-n} - (\Lambda+\lambda I )^{-n} =  &(\hat{\Lambda}_{n}+\lambda I )^{-(n-1)}[(\hat{\Lambda}_{n}+\lambda I )^{-1} - (\Lambda+\lambda I )^{-1}]\\
        & + \sum_{i=1}^{n-1}(\hat{\Lambda}_{n}+\lambda I )^{-i}(\Lambda-\hat{\Lambda}_{n})(\Lambda+\lambda I )^{-(n+1-i)}.
    \end{split}
\end{equation*}
\end{lemma}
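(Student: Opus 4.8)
\textbf{Proof proposal for Lemma~\ref{reducing}.}
The plan is to establish the identity by induction on $n$, reducing the difference of $n$-th inverse powers to the difference of first inverse powers via a telescoping argument. For the base case $n=1$ the asserted identity is simply $(\hat{\Lambda}_{n}+\lambda I)^{-1} - (\Lambda+\lambda I)^{-1} = (\hat{\Lambda}_{n}+\lambda I)^{0}[(\hat{\Lambda}_{n}+\lambda I)^{-1} - (\Lambda+\lambda I)^{-1}]$ with the empty sum vanishing, which is a tautology. The only nontrivial input at this stage is the elementary resolvent identity $(\hat{\Lambda}_{n}+\lambda I)^{-1} - (\Lambda+\lambda I)^{-1} = (\hat{\Lambda}_{n}+\lambda I)^{-1}(\Lambda - \hat{\Lambda}_{n})(\Lambda+\lambda I)^{-1}$, which follows from $A^{-1}-B^{-1} = A^{-1}(B-A)B^{-1}$ for invertible $A,B$; invertibility of $\hat{\Lambda}_{n}+\lambda I$ and $\Lambda+\lambda I$ is guaranteed since both operators are positive and $\lambda>0$.

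For the inductive step, assume the identity holds for $n-1$. First I would write $(\hat{\Lambda}_{n}+\lambda I)^{-n} - (\Lambda+\lambda I)^{-n}$ and split it by inserting an intermediate term, for instance
\begin{equation*}
(\hat{\Lambda}_{n}+\lambda I)^{-n} - (\Lambda+\lambda I)^{-n} = (\hat{\Lambda}_{n}+\lambda I)^{-1}\big[(\hat{\Lambda}_{n}+\lambda I)^{-(n-1)} - (\Lambda+\lambda I)^{-(n-1)}\big] + \big[(\hat{\Lambda}_{n}+\lambda I)^{-1} - (\Lambda+\lambda I)^{-1}\big](\Lambda+\lambda I)^{-(n-1)}.
\end{equation*}
To the first bracket I would apply the induction hypothesis, and to the second I would apply the resolvent identity above. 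Collecting the resulting terms, the leading ``$(\hat{\Lambda}_{n}+\lambda I)^{-1}$ times first-power difference'' piece coming from the induction hypothesis combines with the piece $[(\hat{\Lambda}_{n}+\lambda I)^{-1} - (\Lambda+\lambda I)^{-1}](\Lambda+\lambda I)^{-(n-1)}$ after expanding it as $(\hat{\Lambda}_{n}+\lambda I)^{-1}(\Lambda-\hat{\Lambda}_{n})(\Lambda+\lambda I)^{-n}$; this is exactly the $i=1$ term that must be added to the inherited sum $\sum_{i=1}^{n-2}$ to produce $\sum_{i=1}^{n-1}$, after a reindexing. One then has to check that the powers match: multiplying the inherited sum $\sum_{i=1}^{n-2}(\hat{\Lambda}_{n}+\lambda I)^{-i}(\Lambda-\hat{\Lambda}_{n})(\Lambda+\lambda I)^{-(n-i)}$ by $(\hat{\Lambda}_{n}+\lambda I)^{-1}$ shifts the index $i\mapsto i+1$ and gives $\sum_{i=2}^{n-1}(\hat{\Lambda}_{n}+\lambda I)^{-i}(\Lambda-\hat{\Lambda}_{n})(\Lambda+\lambda I)^{-(n+1-i)}$, and the $i=1$ term supplies the missing summand, while the $(\hat{\Lambda}_{n}+\lambda I)^{-1}$ multiplied against $(\hat{\Lambda}_{n}+\lambda I)^{-(n-2)}[(\hat{\Lambda}_{n}+\lambda I)^{-1}-(\Lambda+\lambda I)^{-1}]$ yields the claimed leading term $(\hat{\Lambda}_{n}+\lambda I)^{-(n-1)}[(\hat{\Lambda}_{n}+\lambda I)^{-1}-(\Lambda+\lambda I)^{-1}]$.

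The main obstacle—though it is bookkeeping rather than a genuine difficulty—is keeping the exponents on $(\hat{\Lambda}_{n}+\lambda I)$ and $(\Lambda+\lambda I)$ straight during the reindexing, since the two families of powers shift asymmetrically (only the hatted powers pick up the extra factor), and making sure the split and the resolvent identity are applied consistently so that no term is double-counted or dropped. Since all operators involved are bounded and commute with their own resolvents, there are no domain or convergence issues, and the manipulations are purely algebraic in the commutative algebra generated by each operator together with $\lambda I$; alternatively, one could dispense with induction entirely and verify the identity directly by multiplying both sides on the left by $(\hat{\Lambda}_{n}+\lambda I)^{n}$ and on the right by $(\Lambda+\lambda I)^{n}$, reducing it to a polynomial identity, but the inductive telescoping is cleaner to present. $\Box$
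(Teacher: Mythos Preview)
Your inductive telescoping argument is correct: the split
\[
A^{-n}-B^{-n}=A^{-1}\bigl[A^{-(n-1)}-B^{-(n-1)}\bigr]+\bigl[A^{-1}-B^{-1}\bigr]B^{-(n-1)}
\]
(with $A=\hat{\Lambda}_{n}+\lambda I$, $B=\Lambda+\lambda I$) is valid without any commutativity assumption, the induction hypothesis applied to the first bracket produces the leading term and the sum over $i=2,\dots,n-1$ after the shift $i\mapsto i+1$, and the resolvent identity $A^{-1}-B^{-1}=A^{-1}(B-A)B^{-1}$ applied to the second piece supplies exactly the missing $i=1$ summand. The bookkeeping you flag is indeed the only thing to watch, and you have handled it correctly.

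The paper itself does not prove this lemma; it simply quotes it as \cite[Lemma~A.3]{gupta2024optimal}. So there is no in-paper argument to compare against, and your proposal stands on its own as a clean, self-contained proof. One small remark: your closing comment about ``the commutative algebra generated by each operator together with $\lambda I$'' is slightly misleading, since $A$ and $B$ need not commute with each other; but your actual computation nowhere uses that they do, so the proof is unaffected.
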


\begin{lemma}\label{powerscomp}
Let $0 < l,\eta <1$, then with probability at least $1-\frac{\eta}{2}$, we have
    \begin{equation*}
        \|(T+\lambda I)^{l}(T_{\mathbf{x}}+\lambda I)^{-l}\|_{\mathcal{L}(\mathcal{H}_{K})} \leq 2^l ,
    \end{equation*}
    provided
\begin{equation}\label{lambdabound}
    \sqrt{\lambda} \geq 16 \kappa \sqrt{\frac{\mathcal{N}(\lambda)}{n}} \log(4/\eta).
\end{equation}
\end{lemma}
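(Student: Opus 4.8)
\textbf{Proof proposal for Lemma~\ref{powerscomp}.}
The plan is to reduce the operator-norm bound to a scalar estimate controlled by the single random quantity $\|(T+\lambda I)^{-1/2}(T-T_{\mathbf{x}})(T+\lambda I)^{-1/2}\|_{\mathcal{L}(\mathcal{H}_K)}$. First I would write, using the standard trick with the square root and self-adjointness,
\begin{equation*}
\|(T+\lambda I)^{l}(T_{\mathbf{x}}+\lambda I)^{-l}\|_{\mathcal{L}(\mathcal{H}_K)}
= \|(T_{\mathbf{x}}+\lambda I)^{-l}(T+\lambda I)^{l}\|_{\mathcal{L}(\mathcal{H}_K)}
\le \|(T_{\mathbf{x}}+\lambda I)^{-1}(T+\lambda I)\|_{\mathcal{L}(\mathcal{H}_K)}^{l},
\end{equation*}
where the inequality is Lemma~\ref{cordes} (Cordes) applied with $T_1=(T_{\mathbf{x}}+\lambda I)^{-1}$, $T_2=(T+\lambda I)$ and exponent $l\in(0,1)$ — here one uses that $\|A^lB^l\|=\|(A^{1/2}B A^{1/2})^l\|=\|A^{1/2}BA^{1/2}\|^l$ for positive $A,B$, so it suffices to bound $\|(T_{\mathbf{x}}+\lambda I)^{-1}(T+\lambda I)\|$. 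So the target is to show this last quantity is at most $2$ on the high-probability event.

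Next I would insert $(T+\lambda I)^{1/2}$ factors and estimate
\begin{equation*}
\|(T_{\mathbf{x}}+\lambda I)^{-1}(T+\lambda I)\|_{\mathcal{L}(\mathcal{H}_K)}
= \|(T+\lambda I)^{1/2}(T_{\mathbf{x}}+\lambda I)^{-1}(T+\lambda I)^{1/2}\|_{\mathcal{L}(\mathcal{H}_K)}
= \|\big((T+\lambda I)^{-1/2}(T_{\mathbf{x}}+\lambda I)(T+\lambda I)^{-1/2}\big)^{-1}\|_{\mathcal{L}(\mathcal{H}_K)}.
\end{equation*}
Writing $B_\lambda := (T+\lambda I)^{-1/2}(T_{\mathbf{x}}+\lambda I)(T+\lambda I)^{-1/2} = I - (T+\lambda I)^{-1/2}(T-T_{\mathbf{x}})(T+\lambda I)^{-1/2}$, the right-hand side equals $\|B_\lambda^{-1}\|$, and by a Neumann-series bound $\|B_\lambda^{-1}\|\le (1-\beta)^{-1}\le 2$ whenever $\beta := \|(T+\lambda I)^{-1/2}(T-T_{\mathbf{x}})(T+\lambda I)^{-1/2}\|_{\mathcal{L}(\mathcal{H}_K)}\le \tfrac12$. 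Thus everything comes down to showing $\beta\le\tfrac12$ with probability at least $1-\eta/2$ under the hypothesis \eqref{lambdabound}.

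To control $\beta$ I would invoke a concentration inequality for the self-adjoint random operator $\xi(w) := (T+\lambda I)^{-1/2}\big(\langle \cdot,\, \langle w,\cdot\rangle_H\rangle_{\mathcal{H}_K}\,\langle w,\cdot\rangle_H\big)(T+\lambda I)^{-1/2}$ (so that $T-T_{\mathbf{x}}$ conjugated by $(T+\lambda I)^{-1/2}$ is the average of $\mathbb{E}\xi - \xi(w_i)$), the standard tool being the Bernstein-type bound for operators used in \cite{devito} (cf.\ Lemma~\ref{emp}); the almost-sure bound $\langle w,w'\rangle_H\le\kappa^2$ gives $\|\xi(w)\|\le \kappa^2/\lambda$ and $\mathbb{E}\|\xi(w)\|\le \mathcal N(\lambda)/\lambda$-type variance control, yielding $\beta \lesssim \log(4/\eta)\big(\tfrac{2\kappa}{n\lambda}+\sqrt{\tfrac{\kappa\mathcal N(\lambda)}{n\lambda}}\big)$ on an event of probability $\ge 1-\eta/2$ — indeed this is essentially Lemma~\ref{emp} with the two-sided conjugation. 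It then remains to check that the hypothesis $\sqrt\lambda \ge 16\kappa\sqrt{\mathcal N(\lambda)/n}\,\log(4/\eta)$ forces this bound to be $\le\tfrac12$: since $\mathcal N(\lambda)\ge \mu_1/(\mu_1+\lambda)$ is bounded below by a constant (for $\lambda\le s$) and $\kappa^2\le s$, one has $\tfrac{2\kappa}{n\lambda}\lesssim \sqrt{\tfrac{\kappa\mathcal N(\lambda)}{n\lambda}}$, so the whole expression is $\lesssim \log(4/\eta)\sqrt{\kappa\mathcal N(\lambda)/(n\lambda)}$, and plugging in \eqref{lambdabound} (which says $\sqrt{\mathcal N(\lambda)/n}\,\log(4/\eta)\le \sqrt\lambda/(16\kappa)$) makes this at most a fixed small constant, in particular $\le\tfrac12$. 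I expect the only real bookkeeping obstacle to be pinning down the absolute constants so that the chosen numerical threshold $16$ in \eqref{lambdabound} indeed yields $\beta\le\tfrac12$ (and hence the clean bound $2^l$); the structural steps — Cordes inequality, the $B_\lambda^{-1}$ Neumann bound, and the operator Bernstein estimate — are routine.
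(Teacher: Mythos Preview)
Your overall strategy --- reduce to $l=1$ via Cordes (Lemma~\ref{cordes}), control the $l=1$ norm by a Neumann-series argument, and feed in the concentration bound from Lemma~\ref{emp} under the hypothesis~\eqref{lambdabound} --- is exactly the paper's approach. The paper's proof is just the three lines
\[
\|(T_{\mathbf{x}}-T)(T+\lambda I)^{-1}\|\le \tfrac12,\qquad
\|(T+\lambda I)(T_{\mathbf{x}}+\lambda I)^{-1}\|=\bigl\|[I+(T_{\mathbf{x}}-T)(T+\lambda I)^{-1}]^{-1}\bigr\|\le 2,
\]
and then Cordes.

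There is, however, a genuine slip in your symmetrization step. You claim
\[
\|(T_{\mathbf{x}}+\lambda I)^{-1}(T+\lambda I)\|
=\|(T+\lambda I)^{1/2}(T_{\mathbf{x}}+\lambda I)^{-1}(T+\lambda I)^{1/2}\|,
\]
and more generally $\|A^lB^l\|=\|(A^{1/2}BA^{1/2})^l\|$ for positive $A,B$. This identity is false: for positive $A,C$ one only has $\|A^{1/2}CA^{1/2}\|=\rho(CA)\le\|CA\|$ (spectral radius), with strict inequality in general (e.g.\ $A=\mathrm{diag}(1,4)$, $C=\bigl(\begin{smallmatrix}1&1\\1&1\end{smallmatrix}\bigr)$ gives $\|CA\|=\sqrt{34}>5=\|A^{1/2}CA^{1/2}\|$). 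Since you need an \emph{upper} bound on $\|CA\|$, bounding the symmetrized quantity $\|B_\lambda^{-1}\|$ by $2$ does not finish the job. The fix is simply to skip the symmetrization: write $(T+\lambda I)(T_{\mathbf{x}}+\lambda I)^{-1}=[I+(T_{\mathbf{x}}-T)(T+\lambda I)^{-1}]^{-1}$ directly and apply the Neumann bound with the one-sided quantity $\|(T_{\mathbf{x}}-T)(T+\lambda I)^{-1}\|$, which is precisely what Lemma~\ref{emp} controls. With that change your argument coincides with the paper's.
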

\begin{proof}
From Lemma \ref{emp} and condition $(\ref{lambdabound})$, we know that
\begin{equation*}
    \|(T_{\mathbf{x}} - T)(T+ \lambda I)^{-1}\|_{\mathcal{L}(\mathcal{H}_{K})} \leq 8\kappa \sqrt{\frac{\mathcal{N(\lambda)}}{n \lambda}}\log(4/\eta) \leq 1/2.
\end{equation*}
Next, we have
    \begin{equation}\label{power1}
    \begin{split}
        \|(T+\lambda I)(T_{\mathbf{x}}+\lambda I)^{-1}\|_{\mathcal{L}(\mathcal{H}_{K})} = & \|[I + (T_{\mathbf{x}} - T)(T+ \lambda I)^{-1}]^{-1}\|_{\mathcal{L}(\mathcal{H}_{K})}\\
        \leq & \frac{1}{1-\|(T_{\mathbf{x}} - T)(T+ \lambda I)^{-1}\|_{\mathcal{L}(\mathcal{H}_{K})}} \leq 2,
    \end{split}
    \end{equation}
and the result follows from Lemma \ref{cordes}.
\end{proof}

\section*{Acknowledgment}
SS acknowledges the Anusandhan National Research Foundation, Government of India, for the financial
support through MATRICS grant - MTR/2022/000383.
\bibliographystyle{acm}
\bibliography{123}
\end{document}